\definecolor{lightGray}{RGB}{235,235,235}
\definecolor{orange}{RGB}{255,128,0}
\definecolor{ucib}{RGB}{0,36,105}
\definecolor{mygreen}{RGB}{0,128,0}
\definecolor{lightBlue}{RGB}{102,153,204}
\newtheorem{thm}{Theorem}[section]
\newtheorem{lem}[thm]{Lemma}
\newtheorem{prop}[thm]{Proposition}
\newtheorem{rems}[thm]{Remarks}
\newtheorem{rem}[thm]{Remark}
\newtheorem{deff}[thm]{Definition}
\DeclareMathAlphabet{\mathpzc}{OT1}{pzc}{m}{it}
\numberwithin{equation}{section}
\begin{document}
\bibliographystyle{plain}

\title{On the Maximal Parameter Range of Global Stability for a Nonlocal Thermostat Model}

\author{Patrick Guidotti}
\author{Sandro Merino}
\address{University of California, Irvine\\
Department of Mathematics\\
340 Rowland Hall\\
Irvine, CA 92697-3875\\ USA }

\address{Basler Kantonalbank \\
Brunngaesslein 3\\ 
CH-4002 Basel\\
Switzerland}
\email{gpatrick@math.uci.edu and sandro.merino@bkb.ch}

\begin{abstract}
The global asymptotic stability of the unique steady state of 
a nonlinear scalar parabolic equation with a nonlocal boundary
condition is studied. The equation describes the evolution
of the temperature profile that is subject to a feedback control loop.
It can be viewed as a model of a rudimentary thermostat, where a
parameter controls the intensity of the heat flow in response to the
magnitude of the deviation from the reference temperature at a
boundary point. The system is known to undergo a Hopf bifurcation when
the parameter exceeds a critical value. Results on the characterization of
the maximal parameter range where the reference steady state is
globally asymptotically stable are obtained by analyzing a closely
related nonlinear Volterra integral equation. Its kernel is derived
from the trace of a fundamental solution of a linear heat equation.
A version of the Popov criterion is adapted and applied to the
Volterra integral equation to obtain a sufficient condition for the
asymptotic decay of its solutions.        
\end{abstract}

\keywords{nonlinear reaction diffusion systems, nonlocal boundary conditions, nonlinear feedback
  control systems, Popov criterion, Volterra integral equation, global attractor}
\subjclass[1991]{}

\maketitle

\section{Introduction}
The nonlocal nonlinear problem
\begin{equation}\label{tsEq}
  \begin{cases}
    u_t-u_{xx}=0&\text{in }(0,\infty)\times(0,\pi),\\
    u_x(t,0)=\tanh \bigl( \beta u(t,\pi) \bigr)&\text{for }t\in(0,\infty),\\
    u_x(t,\pi)=0&\text{for }t\in(0,\infty),\\
    u(0,\cdot)=u_0&\text{in }(0,\pi),
  \end{cases}
\end{equation}
with parameter $\beta\in\mathbb{R}$ was first introduced in \cite{GM97} as a simple model for a thermostat
where the sensor is not placed in the same position as the
heating/cooling actuator that receives temperature feedback. 
In that paper, it was shown that the local exponential stability of
the trivial solution $u(t,\cdot)\equiv 0$ is lost when
$\beta\in(0,\infty)$ exceeds the critical value 
$\beta_0 \approx 5.6655$. 
In fact, in \cite{GM97} it is shown that a Hopf bifurcation occurs at
$\beta_0$ which produces a local branch of periodic solutions for
$\beta\in(\beta_0,\beta_0+\varepsilon)$ and some $\varepsilon>0$. In this
paper we show that the trivial solution is globally attractive for $\beta\in(0,\beta_0)$. 
Following the terminology used in \cite{La91} we need to distinguish between the concept 
of the attractor $\hat{\mathcal{A}}_{\beta}$ and the B-attractor $\mathcal{A}_{\beta}$ when 
formulating our main results in Theorem \ref{mainresult}. 
We point out that, in general, the attractor $\hat{\mathcal{A}}_{\beta}$
is a proper subset of the B-attractor $\mathcal{A}_{\beta}$. 
We  prove that the continuous semiflow $\Phi_\beta$ induced on
$\operatorname{H}^1(0,\pi)$ by the system \eqref{tsEq} has a global 
attractor $\hat{\mathcal{A}}_{\beta}=\{ 0\}$ for $\beta\in (0,\beta_0)$ and 
that for $\beta\in (0,\frac{4}{\pi})$ the B-attractor and the attractor coincide, i.e. 
$\mathcal{A}_{\beta}=\hat{\mathcal{A}}_{\beta}=\{0\}\,$.
In \cite{CBOP03}, the authors prove that the global B-attractor $\mathcal{A}_{\beta}$ exists for
$\beta\in(0,\infty)$ and that $\mathcal{A}_{\beta}=\{0\}$ for $\beta\in (0,\frac{1}{\pi})\,$. 
We thus extend previous results by determining larger parameter ranges where the 
B-attractor and the attractor are shown to be equal to $\{0\}\,$.\\ 
The existence of the B-attractor $\mathcal{A}_{\beta}$ shown in \cite{CBOP03} for 
$\beta\in(0,\infty)$ implies, in particular, that all orbits are bounded in the underlying Banach space. 
Since $\operatorname{H}^1(0,\pi) \hookrightarrow \operatorname{C} \Bigl( [0,\pi]\Bigr)$, the orbits satisfy
$$
 \| \Phi _\beta (t,u_0)\| _\infty\leq c(u_0)<\infty,\: t\geq 0.
$$
By means of a weak formulation we interpret \eqref{tsEq} as the
abstract Cauchy problem
\begin{equation}\label{tsACP}
  \begin{cases}
    \dot u+A_Nu=-\gamma_0' \Bigl( \tanh\bigl( \beta \gamma_\pi
    u(t)\bigr)\Bigr) ,&t>0,\\ u(0)=u_0&
  \end{cases}
\end{equation}
in the Banach space $\operatorname{H}^{-1}=\Bigl(
\operatorname{H}^{1}(0,\pi)\Bigr) '$, where $\operatorname{H}^{-1}$ is
the dual space of $\operatorname{H}^{1}:=\operatorname{H}^{1}(0,\pi)$.
Here $A_N$ denotes the unbounded operator
$$
 A_N:\operatorname{dom}(A_N)=\operatorname{H}^1\subset
 \operatorname{H}^{-1}\to\operatorname{H}^{-1}
$$
given by
$$
 \langle A_Nu,v \rangle =\int_0^\pi u_xv_x\, dx,\: u,v\in
 \operatorname{H}^{1},
$$
and the linear operators $\gamma_\pi \in \mathcal{L}
(\operatorname{H}^1,\mathbb{R})$ with $\gamma_\pi(u)=u(\pi)$ and
$\gamma_0'\in \mathcal{L} (\mathbb{R},\operatorname{H}^{-1})$ with
$\langle \gamma_0'(s),v \rangle =s \gamma_0(v)$, for $s\in \mathbb{R}$
and $v\in \operatorname{H}^1$ denote the trace operator at $x=\pi$ and
the dual of the trace operator at $x=0$, respectively. Notice, in
particular, that $\gamma_0'(s)=s\delta_0$, where 
$\delta_0 \in \operatorname{H}^{-1}$ is the Dirac distribution supported in $x=0$, 
i.e.  $\delta_0$ is defined by the duality pairing 
$\left\langle \delta_0, \varphi
\right\rangle_{\operatorname{H}^{-1},\operatorname{H}^{1}}:=\varphi(0)$.

The solution of \eqref{tsACP} exists globally and can be represented
by the variation of constants formula
\begin{align}\label{vcf}
  u(t,u_0)&=e^{-tA_N}u_0 - \int _0^t e^{-(t-\tau)A_N} \gamma_0' \Bigl[
  \tanh\bigl( \beta \gamma_\pi\bigl(  u(\tau,u_0)\bigr)
  \bigr) \Bigr]\, d\tau \notag\\
  &=e^{-tA_N}u_0 - \int _0^t \tanh\Bigl[\beta\gamma_\pi\bigl(
u(\tau,u_0)\bigr)\Bigr]e^{-(t-\tau)A_N}\delta_0\, d\tau 
\end{align}
where the second identity holds due to the linearity of the semigroup 
$\{ e^{-tA_N}\, |\, t\geq 0\}$ generated by the operator $-A_N$ on 
$\operatorname{H}^{-1}$, which is analytic and strongly continuous. 
As described in \cite{GM97} and \cite{GM99} the general
theory presented in \cite{Ama95} allows one to formulate the Cauchy Problem
\eqref{tsACP} on a scale of Banach spaces which includes e.g.
$\operatorname{H}^1\subset\operatorname{H}^{-1}$. As discussed in
\cite{GM97},  and further explored in \cite{CBOP03}, the global semiflow
$\Phi_\beta$ induced by \eqref{tsACP} consists of classical solutions
$u(t,u_0)(x)$ of the parabolic initial boundary value problem
\eqref{tsEq}.

We conclude this section with a brief and incomplete overview on the research 
that the seemingly innocuous thermostat model triggered since its introduction in
\cite{GM97}. In that paper a brief passage in N. Wiener's book ``Cybernetics'' 
is quoted and indicated as the source of initial inspiration. 

Results about the linear(ized) model have been
discussed in \cite{KmcK04} also allowing the sensor location to be
any $x\in(0,\pi]$. They follow an approach based on an integral
reformulation of the problem via the Laplace transform. 
They observe that the same phenomena rigorously proven 
in \cite{GM97} remain valid if the sensor is placed in a different location 
than the actuator, no matter how close they may be. 
Global stability of the trivial solution has been studied in \cite{CBOP03}, 
where the authors prove the existence of a bounded global attractor 
$\mathcal{A}_\beta$ for any $\beta>0$ and manage to obtain global stability of 
the trivial solution for $\beta\in (0,\frac{1}{\pi})$, showing that 
$\mathcal{A}_\beta=\{0\}$ in that parameter range. 
In Remarks \ref{Ljap1} and \ref{Ljap2}, we give two simple proofs for 
global stability. The first recovers the global stability result for 
$\beta\in(0,\frac{1}{\pi})$ and the second extends the parameter range of 
global stability to $(0,\frac{4}{\pi})$. 

Problem \eqref{tsEq} has also been studied in the presence of noise in \cite{L18}. 
There the author shows that the average solution of the thermostat problem 
with randomly switching locations of the sensor and of the actuator can 
exhibit exponential growth in spite of the fact that stability holds for both 
configurations. 

The observation that the first eigenfunction of the 
linearization of the stationary problem remains positive up to 
$\beta=\frac{1}{2}$ and that, for $\beta\in(0,\frac{1}{2})$, 
the semi-group is non-positive leads to the concept of eventually positive semi-groups. 
These ideas have recently been introduced and developed in \cite{DGK16,DG18}. 

Various nonlinear stationary problems associated with \eqref{tsEq} have been 
studied by several authors. We only mention G. Infante and J.R.L Webb 
(\cite{GeWe06(1)}, \cite{GeWe06(2)}) here and refer to their bibliography for numerous additional results.      

The Hopf bifurcation phenomenon engendered by the nonlocal nature of the 
boundary condition has also inspired the research presented in \cite{GoGuSo13} on a 
market price formation model introduced by J.M. Lasry and P.L. Lions. 
In particular, in that specific context a similar Hopf bifurcation scenario 
shows that ``demand" and ``supply" do not simply create stable prices but can 
lead to price oscillations. The phenomenon emerges on the basis of the 
modelled behaviour of the population densities of buyers and sellers 
positioned in a liquid market over a continuum of prospective transaction prices.

The variety of results obtained since the introduction of this 
rudimentary thermostat model for the original purpose of  
underpinning N. Wiener's postulated ``wild oscillations" more 
explicitly and rigorously, bears witness to the interesting 
underlying mathematical properties of this prototypical ``parabolic oscillator".

Before proceeding to the derivation of our main result in the next 
section, we provide two simpler proofs, albeit at the cost of only
obtaining smaller ranges for global stability.  
In the first remark it is shown that the stability result of
\cite{CBOP03} can be obtained in a different
way that gives additional insight into the behavior of
solutions. In Remark \ref{Ljap2} we provide a sharper proof that extends the 
stability range from $(0,\frac{1}{\pi})$ to $(0,\frac{4}{\pi})\,$. 

\begin{rem}\label{Ljap1}
A simple calculation shows that
$$
 \frac{1}{2}\frac{d}{dt}\int _0^\pi u^2(t,\xi)\, d\xi=
 -\int _0^\pi u_x^2( t,\xi)\, d\xi-u(t,0)\tanh \bigl(\beta
 u(t,\pi)\bigr),
$$
so that $\| u(t,\cdot)\| _2$ can only grow if
$u(t,0)u(t,\pi)<0$. An application of the Cauchy-Schwarz inequality yields that
$$
 \bigl( u(t,0)-u(t,\pi)\bigr)^2=\bigl( \int _0^\pi u_x(t,x)\, dx\bigr)^2\leq \pi\, \int _0^\pi u_x^2(t,\xi)\,
 d\xi.
$$
It follows that
$$
 \frac{1}{2}\frac{d}{dt}\int _0^\pi u^2(t,\xi)\, d\xi\leq -u(t,0)\tanh
 \bigl(\beta u(t,\pi)\bigr)-\frac{1}{\pi}\bigl( u(t,0)-u(t,\pi)\bigr)^2,
$$
and that $\frac{d}{dt}\| u(t,\cdot)\|^2_2$ can only be non-negative if
$$
 \frac{1}{\pi}\bigl( u(t,0)-u(t,\pi)\bigr)^2\leq \tanh \bigl( \beta
 \big |u(t,\pi)\big |\bigr)\big |u(t,0)\big |.
$$
Therefore for non-negativity it is necessary that
$$
 \frac{1}{\pi}\big |u(t,0)\big |\leq \tanh \bigl( \beta
 \big |u(t,\pi)\big |\bigr), 
$$
i.e., that $\big |u(t,0)\big |\leq \pi$, and that
$$
 \frac{1}{\pi}\big |u(t,\pi)\big |^2\leq\tanh \bigl( \beta
 \big |u(t,\pi)\big |\bigr)\big |u(t,0)\big |\leq \pi,
$$
i.e., that $\big |u(t,\pi)\big |\leq \pi$, but also that
$$
 \big |u(t,\pi)\big |^2\leq \pi^2\tanh^2 \bigl( \beta
 \big |u(t,\pi)\big |\bigr),
$$
which entails that
$$
 \big |u(t,\pi)\big |\leq \pi\tanh\bigl( \beta
 \big |u(t,\pi)\big |\bigr)\leq \pi\beta\big |u(t,\pi)\big |,
$$
or, equivalently, that $\beta\geq\frac{1}{\pi}$. This yields the
stability of the trivial solution for $\beta\in(0,\frac{1}{\pi})$ but also
the boundedness of the orbits. We don't give the details of the argument
and refer to the result on the existence of the global B-attractor for any 
$\beta>0$ which is proven in \cite{CBOP03} and motivated us to revisit this problem. 
\end{rem}
In the next remark we extend the stability range to $\beta \in (0\,,\frac{4}{\pi})\,$ 
by a relatively simple and direct argument.
\begin{rem}\label{Ljap2}
Rewrite \eqref{tsACP} as
$$
 u_t+A_Nu=-\frac{\tanh(\beta z)}{z}\gamma_\pi(u) \delta_0,
$$
for $z(t)=\gamma_\pi \bigl( u(t,\cdot)\bigr)$. Notice that
$\psi(z)(t)=\frac{\tanh\bigl(\beta z(t)\bigr)}{z(t)}\in[\delta,\beta]$ for 
$t\geq 0$ and some $\delta=\delta(u_0)>0$ since solutions of \eqref{tsACP} 
remain pointwise bounded as shown in \cite{CBOP03}. We claim that
$$
 \frac{1}{2}\int _0^\pi u^2(x)\, dx=\frac{1}{2}(u,u)_2
$$
is a Ljapunov functional for the equation as long as 
$\beta \in (0\,,\frac{4}{\pi})\,$. In Section 5 we will discuss that this implies 
that the B-attractor is given by $A_\beta=\{0\}$ for that
range of the parameter $\beta$. In order to verify that we indeed have a Ljapunov functional we
compute
\begin{align*}
 \frac{d}{d t}\frac{1}{2}(u,u)_2&= \Big\langle
  -A_Nu-\psi(z)u(\cdot,\pi)\delta _0,u \Big\rangle
 _{\operatorname{H}^{-1},\operatorname{H}^1} \\
 &= -\big\langle A_Nu,u \big\rangle
   _{\operatorname{H}^{-1},\operatorname{H}^1}
   -\psi(z)u(\cdot,\pi)u(\cdot,0)\\
 &= -\big\langle A_Nu,u \big\rangle
   _{\operatorname{H}^{-1},\operatorname{H}^1} -
   \psi(z)\Big\langle \frac{1}{2}(\delta_0 \delta_\pi^\top +\delta_\pi \delta ^\top_0)u, u
   \Big\rangle _{\operatorname{H}^{-1},\operatorname{H}^1} \\
  &=-\Big \langle A_Nu+\frac{\psi(z)}{2}(\delta_0 \delta_\pi^\top +\delta_\pi \delta
   ^\top_0)u,u \Big \rangle _{\operatorname{H}^{-1},\operatorname{H}^1},
\end{align*}
where we use the suggestive notation $\delta_x^\top:=\gamma_x$ for
$x\in \{0,\pi\}\,$ to highlight the inherent symmetry. Thus we conclude that the orbital 
derivative is strictly negative on non-stationary orbits if the self-adjoint operators
\begin{equation}\label{saoperators}
 A_N+\frac{\alpha}{2}(\delta_0 \delta_\pi^\top +\delta_\pi \delta
 ^\top_0)
\end{equation}
only possess strictly positive eigenvalues for any parameter value  
$\alpha\in(\delta,\beta)$. 
We will now prove that this is the case for $\alpha\in(0,\frac{4}{\pi})\,$.   
The operators in \eqref{saoperators} correspond to the weak formulation of
the homogeneous heat equation in $(0,\pi)$ with boundary conditions
$$
 u_x(t,0)=\frac{\alpha}{2} u(t,\pi),\: u_x(t,\pi)=-\frac{\alpha}{2}
 u(t,0)\text{ for }t>0.
$$
The eigenfunctions to a putative eigenvalue $\lambda^2\in\mathbb{R}$ can 
be assumed to be of the form $A\sin(\lambda x)+B\cos(\lambda x)$. 
In fact, the trace operators added to the Neumann Laplacian in \eqref{saoperators} 
constitute a compact perturbation, as is discussed in \cite{GM99}. 
Therefore the operators \eqref{saoperators} conserve a pure point spectrum.  Imposing 
the boundary conditions leads to a linear system that only admits non-trivial solutions 
if its determinant vanishes. The characteristic equation is easily found to be 
$$
 \lambda-\bigl[\frac{\alpha^2}{4}+\lambda^2\bigr]
 \frac{\sin(\lambda\pi)}{\alpha}=0.
$$
If negative eigenvalues $\lambda ^2$ exist, then they need to be of the form $\lambda=i\mu$ for 
$\mu\in\mathbb{R}\,$. Then $\mu$ is a solution of 
$$
 \mu=\bigl[\frac{\alpha^2}{4}-\mu^2\bigr]
 \frac{\sinh(\mu\pi)}{\alpha}=:r_\alpha (\mu).
$$
Note that for $|\mu|>\frac{\alpha}{2}$, $\operatorname{id}_\mathbb{R}$ 
and $r_\alpha$ have opposite signs. Any non-trivial solution is therefore 
confined to the interval $\bigl[-\frac{\alpha}{2},\frac{\alpha}{2}\bigr]$,
and, owing to the oddity of the functions, we only need to examine the 
positive half of that interval. A simple calculation shows that 
$r_\alpha '(0)=\frac{\pi}{4}\alpha$, so that $r_\alpha '(0)<1$ as long as 
$\alpha<\frac{4}{\pi}\,$. We proceed to show that $r_\alpha$ is
strictly concave in $[0,\frac{\alpha}{2}]$ for $\alpha\in (0,\frac{4}{\pi})\,$.  
As a consequence no solution $\mu$ of the characterisitic equation can exist  
and hence no non-positive eigenvalue of \eqref{saoperators}  
is found for $\alpha\in (0,\frac{4}{\pi})\,$. 
The straightforward computation of the second derivative of $r_\alpha$ yields  
$$
 4\alpha\, r_\alpha ''(\mu)= \sinh(\mu\pi)\bigl[\pi^2(\alpha^2-4\mu^2)
 -8\bigr]-16\pi\mu\cosh(\mu\pi).
$$
To find a criterion that implies a negative second derivative of $r_\alpha$ observe that
\begin{align*}
 \frac{\tanh(\mu\pi)}{\mu}\bigl[ \pi^2(\alpha^2-4\mu^2)-8\bigr]\leq
 \pi\bigl[ \pi^2(\alpha^2-4\mu^2)-8 \bigr]\leq \alpha^2\pi^3-8\pi.
\end{align*}
This leads to the condition $ \alpha^2\pi^3-8\pi<16\pi$ which holds when $\alpha$ is less than 
$\frac{\sqrt{24}}{\pi}>\frac{4}{\pi}\,$. The desired concavity follows and we 
conclude that the operators \eqref{saoperators} have a 
strictly positive point spectrum, whenever $\alpha\in (0,\frac{4}{\pi})\,$. 
To see that this construction of the Ljapunov functional cannot be extended indefinitely, note that, for $\alpha>\frac{4}{\pi}$, 
$r_\alpha'(0)>1$, which together with $r_\alpha(\frac{\alpha}{2})=0$ 
implies that the characterisitic equation $\mu=r_\alpha(\mu)$ has at least one solution in 
$[0,\frac{\alpha}{2}]$ thus producing a negative eigenvalue of \eqref{saoperators}.
\end{rem}
In this paper a different approach is presented that will ensure stability 
of the trivial solution all the way to the Hopf bifurcation from the steady state 
that occurs for $\beta=\beta_0\approx 5.6655\,$. 
The proof will not construct a Ljapunov functional explicitly. It sharpens 
the result in this remark to a maximal range of stability, yet  
at the expense of a significantly more technical proof.
\section{The Volterra Integral Equation at the boundary component $\left\{ x=\pi \right\}$ }
The (abstract) variation of constant formula representation
\eqref{vcf} for the classical solutions of \eqref{tsEq} can be evaluated at $x=\pi$ to verify that for any given initial state $u_0\in \operatorname{H}^1$ 
$$
u(t):=\gamma_\pi \bigl( u(t,u_0) \bigr), 
$$
solves a Volterra integral equation of the second kind. We collect this observation in the next lemma. 

\begin{lem}\label{VoltInt1}
Let $\beta\in(0,\infty)$ and let $\Phi_\beta(\cdot, u_0)$ be any orbit of the continuous semiflow $\bigl(\Phi_\beta,\operatorname{H}^1\bigr)$ then 
$$
 u(t):=\gamma_\pi \bigl( \Phi _\beta (t,u_0) \bigr)=\Phi _\beta (t,u_0)(x=\pi)
$$ 
solves the nonlinear convolution-kernel Volterra integral equation of the 
second kind
\begin{equation}\label{vie}
y(t)=f(t)+\int _0^t a(t-\tau)g_\beta \bigl( y(\tau)\bigr)\, d\tau , t >0,
\end{equation}
where the so-called forcing function $f\equiv f(u_{0})$, the convolution 
kernel $a$, and $g_{\beta}$ are defined as 
\begin{align*}
  f(t)&:=\bigl( e^{-tA_N}u_0\bigr)(\pi)\text{ for }t\geq 0,\\
  a(t)&:=-\bigl( e^{-tA_N}\delta _0 \bigr)(\pi)\text{ for }t > 0,\\
  g_\beta(w)&:=\tanh ( \beta w)\text{ for }w\in \mathbb{R} . 
\end{align*} 

\end{lem}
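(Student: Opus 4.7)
The plan is to obtain the integral equation by simply applying the trace operator $\gamma_\pi$ to the variation of constants formula \eqref{vcf}, and then to identify each of the resulting terms with $f$, $a$, and $g_\beta$. Concretely, since the orbit $t\mapsto\Phi_\beta(t,u_0)=u(t,u_0)$ already satisfies
$$
 u(t,u_0)=e^{-tA_N}u_0-\int_0^t \tanh\bigl[\beta\gamma_\pi\bigl(u(\tau,u_0)\bigr)\bigr]\, e^{-(t-\tau)A_N}\delta_0\, d\tau
$$
as an identity in the appropriate space on the scale, the first step is to evaluate the pointwise trace at $x=\pi$ on both sides. The linear term immediately yields $f(t)=(e^{-tA_N}u_0)(\pi)$, while the factor inside the integrand that multiplies $e^{-(t-\tau)A_N}\delta_0$ is a scalar depending only on $\tau$ and $u(\tau,u_0)(\pi)=y(\tau)$, so after pulling the trace through the integral we get
$$
 y(t)=f(t)-\int_0^t \tanh\bigl(\beta y(\tau)\bigr)\bigl(e^{-(t-\tau)A_N}\delta_0\bigr)(\pi)\, d\tau,
$$
which matches \eqref{vie} with $a(t-\tau)=-\bigl(e^{-(t-\tau)A_N}\delta_0\bigr)(\pi)$ and $g_\beta=\tanh(\beta\cdot)$.

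The step that requires careful justification is the commutation of $\gamma_\pi$ with the integral. For this I would rely on the analytic semigroup theory already invoked in the paper: because $\{e^{-tA_N}\}_{t\ge 0}$ is an analytic strongly continuous semigroup on $\operatorname{H}^{-1}$, the operator $e^{-tA_N}$ maps into $\operatorname{dom}(A_N^k)$ for every $k$ and every $t>0$, with estimates of the form $\|A_N^k e^{-tA_N}\|_{\mathcal{L}(\operatorname{H}^{-1})}\le C_k t^{-k}$. In particular, $e^{-tA_N}\delta_0$ lies in $\operatorname{H}^1\hookrightarrow C([0,\pi])$ for every $t>0$, so its pointwise evaluation at $\pi$ is well defined and $a(t)$ makes sense. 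Continuity of $\gamma_\pi\in\mathcal{L}(\operatorname{H}^1,\mathbb{R})$ together with Bochner's theorem then lets us interchange $\gamma_\pi$ with the Bochner integral, provided the integrand is integrable as an $\operatorname{H}^1$-valued function on $(0,t)$.

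The main (and only mildly delicate) obstacle is therefore the behavior of the kernel $a(t)=-\gamma_\pi(e^{-tA_N}\delta_0)$ as $\tau\to t$, where the semigroup estimate degenerates. I would handle this by truncating the integral at $\tau\in[0,t-\varepsilon]$, applying $\gamma_\pi$ under the integral there (which is legitimate by the bounded-integrand argument above), and then taking $\varepsilon\to 0^+$. The limit is justified because $\tanh\bigl(\beta\gamma_\pi u(\tau,u_0)\bigr)$ is bounded by $1$ and $t\mapsto a(t)$ is locally integrable on $(0,\infty)$: this integrability is inherited from the well-known $L^1_{\rm loc}$ character of the trace of the Neumann heat kernel on a boundary point (which, in fact, behaves like $t^{-1/2}$ near zero, consistent with the fundamental solution of the heat equation). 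Once the limits match on both sides, the identity \eqref{vie} follows, and the three quantities $f$, $a$, $g_\beta$ are exactly those given in the statement.
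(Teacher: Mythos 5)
Your proposal is correct and follows essentially the same route as the paper: evaluate the variation of constants formula \eqref{vcf} at $x=\pi$, using that the analyticity of the semigroup gives $e^{-tA_N}\delta_0\in\operatorname{H}^1\hookrightarrow \operatorname{C}\bigl([0,\pi]\bigr)$ for $t>0$, so the pointwise trace is well defined. One small inaccuracy: the kernel $a(t)=-\bigl(e^{-tA_N}\delta_0\bigr)(\pi)$ does \emph{not} behave like $t^{-1/2}$ near $t=0$; since the evaluation point $x=\pi$ lies away from the support of $\delta_0$, Remark \ref{rems}(d) shows $a(t)\to 0$ as $t\searrow 0$ and $a$ extends to a bounded smooth function, the $t^{-1/2}$-type singularity occurring only for the trace at $x=0$ (the local kernel $a_{\text{loc}}$). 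Your argument only uses local integrability of $a$, which holds a fortiori, so the proof stands.
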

\begin{proof}
Note that for $t>0$ the analyticity of the semigroup implies that
$e^{-tA_N}\delta _0\in\operatorname{H}^1$ since $\delta_0\in
\operatorname{H}^{-1}$.
Hence, thanks to the embedding $\operatorname{H}^1 \hookrightarrow
\operatorname{C} \bigl( [0,\pi]\bigr)$, the kernel $a$ can be defined
pointwise by evaluating at $x=\pi$.
The evaluation at $x=\pi$ for $t>0$ yields
$$
  u(t,u_0)(x=\pi) =\bigl( e^{-tA_N}u_0\bigr)(x=\pi) - \int _0^t
  \tanh\Bigl[\beta\gamma_\pi\bigl(  u(\tau,u_0)\bigr)\Bigr]\bigl(
  e^{-(t-\tau)A_N}\delta _0 \bigr)(x=\pi)\, d\tau 
$$ 
and concludes the proof.
\end{proof}
Next we collect some remarks, which will be needed in the 
subsequent study of the above Volterra integral equation.
\begin{rems}\label{rems}
{\bf (a)} A full spectral resolution of the operator $A_N$
can be computed in order to obtain a series representations for 
the (kernel of the) semigroup generated by $-A_N$, which reads
\begin{equation}\label{neuFundamental}
 N(t,x):=e^{-tA_N}\delta_0=\sum _{k=0}^\infty c_k\cos(kx)e^{-tk^2}
\end{equation}
and is the fundamental solution for the parabolic homogeneous 
Neumann problem on the interval $[0,\pi]$, where $c_0=\frac{1}{\pi}$
and $c_k=\frac{2}{\pi}$ for $k\geq 1$.
Consequently, the integral kernel $a$ satisifies $a=-N(t,\pi)$, i.e. 
it holds that
\begin{equation}\label{aRep}
  a(t)=-\frac{1}{\pi}+\frac{2}{\pi}\sum_{k=1}^\infty (-1)^{k+1}e^{-tk^2},\: t>0.
\end{equation}
Besides the convolution kernel $a$, the forcing term $f$ can also be
expressed in terms of the basis of eigenfunctions to give
\begin{equation}\label{fRep}
 f(t)=\sum_{k=0}^\infty \langle u_0,\varphi _k \rangle
 e^{-tk^2}\varphi_k(\pi)= \bar u_0+\sqrt{\frac{2}{\pi}}\sum _{k=1}^\infty (-1)^k \hat
 u_{0k}e^{-tk^2},
\end{equation}
for $\hat u_{0k}=\langle u_0,\varphi _k \rangle$ and $k\in \mathbb{N}$.\\[0.1cm]
{\bf (b)} For $u_0\in \operatorname{H}^1$ the forcing function is in
$\operatorname{BUC}^\infty \bigl( (0,\infty)\bigr)$ and its $n$-th
derivative satisfies
$$
 f^{(n)}\in \operatorname{L}^p\bigl((0,\infty)\bigr)\text{ for }p\in [1,\infty] \;,\;n\geq 1\;.
$$
This follows from \eqref{fRep}, i.e. from the exponential convergence
of $u_N(t,u_0)$ to the constant function $\bar u_0$ as $t \to\infty\,$.\\[0.1cm]
{\bf (c)} The fundamental solution \eqref{neuFundamental} can also be obtained from the heat kernel on the whole real line: 
$$
 H(t,x):=\frac{1}{\sqrt{4\pi t}}e^{-\frac{x^2}{4t}},\: x\in\mathbb{R},\: t>0.
$$
First one obtains the kernel $H_\pi$ for the $2\pi$-periodic heat equation by ``periodization''
$$
 H_\pi (t,x):=\sum _{k\in\mathbb{Z}}H(t,x-2\pi k),\: x\in [0,2\pi).
$$
Then by introducing the Riemann theta function
$$
\theta_1(\tau,z):=\sum_{k\in\mathbb{Z}}e^{\pi i k^2 \tau}\,e^{2\pi i kz} \:,\, \tau, z \in \mathbb{C}\,,\, \operatorname{Im}(\tau)>0$$
and by setting
$$
\theta(t,x):=\theta_1(\frac{it}{\pi},\frac{x}{2\pi})=\sum_{k\in\mathbb{Z}}e^{-k^2 t}\,e^{ikz}\:\:, t>0,\, x\in \mathbb{R}\,,
$$
it is directly verfied with \eqref{neuFundamental} that 
\begin{equation}\label{Neumanntheta}
N(t,x)=\frac{1}{\pi}\,\theta(t,x)\:,\, t>0,\: x\in [0,\pi]\:.
\end{equation}
It is a classical result (see e.g. \cite{Lang96}) that the periodic heat kernel 
$H_\pi$ can be represented by $\theta\,$, i.e. 
$$
H_{\pi}(t,x)=\frac{1}{2\pi}\:\theta(t,x)\:,\, t>0\:,\: x\in[0,2\pi]\,.
$$
As a consequence of \eqref{Neumanntheta} the Neumann fundamental solution $N$
can be written in terms of the periodic heat kernel 
\begin{equation}\label{Neumannheat}
N(t,x)=2H_{\pi}(t,x)=H_\pi(t,x)+H_\pi(t,2\pi-x)\:,\, t>0,\: x\in [0,\pi]\:.
\end{equation}
The fundamental solution for the Neumann problem can thus be constructed 
from the heat kernel on $\mathbb{R}\,$ by ``periodization" and ``reflection".  \\[0.1cm]
{\bf (d)} For $a$ and for the ``shifted kernel" $a_s(t):=a(t)+\frac{1}{\pi}$ it holds that
$$
 \lim _{t\to\infty} a(t)=-\frac{1}{\pi},\: \lim _{t\to\infty} a_s(t)=0,
$$
and for $n \geq 1$ 
$$
 \lim _{t\searrow 0} a(t)=\lim _{t\searrow 0} a_s^{(n)}(t)=0\:.
$$
Also note that, by definition,
$$
a^{(n)}=a_s^{(n)},
$$
for $n\geq 1$. The limits for $t\to\infty$ are obtained directly from \eqref{aRep}. 
To determine the one-sided limits we observe that    
$$
-a(t)=N(t,\pi)=2H_{\pi}(t,\pi) 
$$
and therefore the limits for ${t\searrow 0}$ follow from the well-known properties of the heat kernel on the unit circle. 
In particular from the concentration of the kernel's mass at $x=0$ as ${t\searrow 0}\,$, more precisely  
$$
 \lim _{t\searrow 0}\: H_{\pi}(t,x) = \lim _{t\searrow 0}\: H^{(n)}_{\pi}(t,x)=0 \:, n\geq 1 \:,\: x\in (0,2\pi)\;.
$$
In the sequel we will not need the one-sided limits for the derivatives $a^{(n)}\,$. 
For the sake of being more self-contained we also provide an alternative, more direct 
argument for $\lim _{t\searrow 0} a(t)=0\,$. The Jacobi theta function   
$$
\vartheta_4(z,q):=\sum_{k=-\infty}^{\infty}(-1)^k q^{k^2}e^{2kiz}=1+2\sum_{k=1}^{\infty}(-1)^k q^{k^2}cos(2kz)
$$  
is particularly simple for $z=0$  
$$
\vartheta_4(q):=\vartheta_4(0,q)=1+2\sum_{k=1}^{\infty}(-1)^k q^{k^2}.
$$
It is known ( \cite{Bor87}, Chapter 3 ) that $\lim_{q\nearrow 1} \vartheta_4(q) = 0$ 
and therefore 
\begin{equation}\label{thetalim}
\lim_{q\nearrow 1} \sum_{k=1}^{\infty}(-1)^k q^{k^2} =-\frac{1}{2}\,.
\end{equation}
Therefore by setting $q=e^{-t}$ we obtain $\lim _{t\searrow 0} \, a(t)=0$ from  
$$
\lim_{t\searrow 0} \, \frac{2}{\pi}\sum_{k=1}^{\infty}(-1)^{k+1} \, e^{-tk^2} = \frac{1}{\pi}
$$
and \eqref{aRep}.\\[0.1cm]
{\bf (e)} The kernel $a$ (extended by 0 for $t\leq0$) satisfies
$$
 a\in \operatorname{BC}^\infty(\mathbb{R},\mathbb{R})
$$
and its $n$-th derivatives satisfy
$$
 a^{(n)}\in \operatorname{L}^p(\mathbb{R}) \:, p\in [1,\infty]\:,\: n\geq1\;.
$$
This follows from the kernel's representation \eqref{aRep}.\\[0.1cm]
%
{\bf (f)} Similarly the shifted kernel $a_s=a+\frac{1}{\pi}\,$, extended smoothly to $t=0$ by setting $a_s(0)=\frac{1}{\pi}\,$, satisfies
$$
 a_s\in \operatorname{BC}^\infty([0,\infty),\mathbb{R})\cap
 \operatorname{L}^p\bigl((0,\infty)\bigr) \;\;,\; p\in [1,\infty]\;.
$$
Again, this follows from  \eqref{aRep}. Note that the extension of $a_s$ to negative 
arguments by zero creates a discontinuity at $t=0\,$. This will be relevant when 
computing its Fourier transform.\\[0.1cm]
%
%
{\bf (g)} The series representation of the Fourier transforms of $a_s$
and $a_s':=a_s^{(1)}$ will be needed in the sequel to recover the Popov stability 
criterion from the Volterra integral equation. They are given by
\begin{equation}\label{asFourier}
  \hat a_s(\omega)=\frac{2}{\pi}\sum_{k=1}^\infty(-1)^{k+1}
  \frac{k^2-i \omega}{k^4+\omega ^2}
\end{equation}
and by
\begin{equation}\label{as'Fourier}
  \widehat{a_s'}(\omega)=-\frac{1}{\pi}+\frac{2}{\pi}\sum_{k=1}^\infty(-1)^{k+1}
  i \omega\frac{k^2-i \omega}{k^4+\omega ^2},
\end{equation}
respectively. Note that, here, we define $a_s(t)$ also for negative arguments by 
setting its value to zero for $t<0\,$.  As a consequence of this extension, 
$a_s$ is discontinuous at $t=0\,$, which impacts the Fourier transform of its distributional 
derivative $a_s'\,$. \\
To determine the Fourier transforms we use \eqref{aRep} and interchange 
the order of summation and integration, which is justified by the uniform 
convergence in \eqref{aRep}. To compute $\hat a_s$ observe that 
$$
\hat a_s(\omega)=\int_{\mathbb{R}}e^{-i\omega t}a_{s}(t)dt=\int_{0}^{\infty}e^{-i\omega t}a_{s}(t)dt
$$
and therefore 
$$
\hat a _s(\omega) = \frac{2}{\pi}\sum_{k=1}^\infty (-1)^{k+1}\int_{0}^{\infty} 
e^{-i\omega t} e^{-tk^2}\,dt=\frac{2}{\pi}\sum_{k=1}^\infty (-1)^{k+1}
\int_{0}^{\infty} e^{-(i\omega+k^2)t} \,dt=\frac{2}{\pi}\sum_{k=1}^\infty 
(-1)^{k+1}\frac{1}{k^2+i\omega}\,.
$$
To find the Fourier transform of $a_{s}'$ one can proceed similarly and we 
evaluate the occuring integral as follows  
\begin{align*}
{\widehat {a'_s}}(\omega)&=\int_{0}^{\infty}e^{-i\omega t}a_{s}'(t)dt=
\lim_{\varepsilon\to 0}\Big\{\frac{2}{\pi}\sum_{k=1}^\infty (-1)^{k+2}
\int_{\varepsilon}^{\infty} k^2 e^{-k^2t} e^{-i\omega t} \,dt\Big\}\\
&=\lim_{\varepsilon\to 0}\Big\{\frac{2}{\pi}\sum_{k=1}^\infty (-1)^{k+1}
e^{-k^2t} e^{-i\omega t}\Big |_\varepsilon^\infty\Big\}-i\omega\frac{2}{\pi}
\sum_{k=1}^\infty \int _0^\infty(-1)^{k}e^{-k^2t} e^{-i\omega t}\, dt.
\end{align*}
By using \eqref{thetalim} we find the result  
$$
{\widehat {a'_s}}(\omega)=-\frac{1}{\pi}+\frac{2}{\pi}\sum_{k=1}^\infty(-1)^{k+1}
i\omega\frac{(k^2-i\omega)}{k^4+\omega^2}.
$$
%
%
{\bf (h)} The Laplace transform $\mathcal{L}(a)$ of $a$ is given by
$$
\mathcal{L} (a)(s)=-\frac{1}{\pi s}-\frac{1}{2\pi}\sum _{k=1}^\infty (-1)^k\frac{1}{s+k^2}
\:,\:s\in\{ z\in \mathbb{C} \, |\, \operatorname{Re}(z)>0\}\;.
$$
The Laplace transform  $\mathcal{L}(a)$ of the kernel also has the closed form representation
$$
 \mathcal{L} (a)(s)=-\frac{1}{\sqrt{s}\sinh(\pi\sqrt{s})}\,.
$$
The series representation of the Laplace transform is obtained from \eqref{aRep} by 
elementary integrations. Its explicit representation is obtained 
in \cite{GM99} (formula (10)) in a different context and is derived again 
in Section \ref{NyqLinStab}. A more elementary direct computation 
using a partial fraction expansion is also given in e.g. \cite{CM09}. \\[0.1cm]
%
%
{\bf (i)} We note that $G_+(s):=-\mathcal{L}(a)$ can be expressed in
terms of the Fourier transform of $a_s$ and that of $a_s'\,$. 
In fact, for
$\omega\in \mathbb{R}$, it holds that
$$
 \operatorname{Re}\bigl( G_+(i \omega)\bigr)=-\operatorname{Re}
 \bigl( \hat a_s(\omega)\bigr)
$$
and 
\begin{equation*}
 \omega \operatorname{Im}\bigl( G_+(i \omega)\bigr)=
 \operatorname{Re}\bigl( \widehat{a'_s}(\omega)\bigr) .
\end{equation*}
The inequality
\begin{equation}\label{M}
 \operatorname{Re}\bigl( \hat a_s(\omega)\bigr)+
 q\operatorname{Re}\bigl( \widehat{a'_s}(\omega)\bigr)-\frac{1}{\beta}<0 \,,\omega\in \mathbb{R}\,,
\end{equation}
is then equivalent to
\begin{equation}\label{P}
  \operatorname{Re}\bigl( G_+(i \omega)\bigr)-q \omega
  \operatorname{Im}\bigl( G_+(i \omega)\bigr)>-\frac{1}{\beta} \,,\omega\in \mathbb{R}\,.
\end{equation}
We will use this relationship between $\mathcal{L}(a)$ and
$\hat a_s$ and $\widehat{a'_s}$ to verify that the stability condition
\eqref{M} obtained from the analysis of the integral equation
\eqref{vie} is precisely the Popov stability criterion \eqref{P} applied to the
transfer function $G_+\,$.
\end{rems}

The next proposition is one key ingredient for our main stability
result. It shows that, in order to infer the decay of all orbits of
the semiflow $\bigl(\Phi_\beta,\operatorname{H}^1\bigr)$ to zero in
the Banach space $\operatorname{H}^1$, it is sufficient to prove that
their trace at $x=\pi$ decays to zero in $\mathbb{R}$. Hence the
stability analysis of the nonlinear parabolic evolution problem \eqref{tsEq}
reduces to the study of the aymptotic behaviour as $t\to \infty$ of
the solutions of the Volterra integral equation \eqref{vie}. It turns
out that the nonlocality of the problem is thereby encoded in the
kernel $a$ in a more tractable way. 

\begin{prop}\label{ConvEquiv}
For fixed $\beta\in(0,\infty)$ consider orbits $\Phi_\beta(\cdot, u_0)$ of
the semiflow $\bigl(\Phi_\beta,\operatorname{H}^1\bigr)$. Then, for any
$u_{0}\in \operatorname{H}^1$, it holds that 
$$
 \Phi _\beta (t,u_0)\longrightarrow 0 \text{ in } \operatorname{H}^1
 \iff \gamma_\pi \bigl( \Phi_\beta(t,u_0)\bigr) \longrightarrow 0
 \text{ in } \mathbb{R}\,,
$$
as $t\to\infty\,$. 
\end{prop}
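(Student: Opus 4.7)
The forward implication is immediate: the continuous embedding $\operatorname{H}^1(0,\pi)\hookrightarrow\operatorname{C}([0,\pi])$ makes $\gamma_\pi\in\mathcal{L}(\operatorname{H}^1,\mathbb{R})$, so convergence in $\operatorname{H}^1$ forces convergence of the trace at $x=\pi$. The real content is the reverse implication, for which I plan to use an $\omega$-limit set argument exploiting the parabolic smoothing of the semiflow together with the constraint the hypothesis places on boundary values along limit trajectories.

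Assume $\gamma_\pi(\Phi_\beta(t,u_0))\to 0$ as $t\to\infty$. By the existence of the global B-attractor $\mathcal{A}_\beta$ for every $\beta>0$ proved in \cite{CBOP03}, the orbit $\{\Phi_\beta(t,u_0):t\geq 0\}$ is bounded in $\operatorname{H}^1$. Since $-A_N$ generates an analytic semigroup on $\operatorname{H}^{-1}$ and the nonlinearity $\tanh\bigl(\beta\gamma_\pi(\cdot)\bigr)$ is uniformly bounded by $1$, the variation of constants formula \eqref{vcf} combined with standard smoothing estimates for analytic semigroups yields that the orbit is eventually bounded in a Sobolev scale $\operatorname{H}^s$ with $s>1$. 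The compact embedding $\operatorname{H}^s\hookrightarrow\operatorname{H}^1$ then makes $\{\Phi_\beta(t,u_0):t\geq T\}$ relatively compact in $\operatorname{H}^1$ for every $T>0$. Consequently the $\omega$-limit set $\omega(u_0)\subset\operatorname{H}^1$ is nonempty, compact, and invariant under $\Phi_\beta$.

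Next I would characterize $\omega(u_0)$. Continuity of $\gamma_\pi$ and the standing hypothesis imply that every $\phi\in\omega(u_0)$ satisfies $\gamma_\pi(\phi)=0$. By the standard invariance of $\omega$-limit sets together with relative compactness, every $\phi\in\omega(u_0)$ lies on a complete orbit $v:\mathbb{R}\to\omega(u_0)$ of the semiflow with $v(0)=\phi$, along which $\gamma_\pi\bigl(v(t)\bigr)=0$ for all $t\in\mathbb{R}$. Thus $\tanh\bigl(\beta\gamma_\pi(v(t))\bigr)\equiv 0$, so $v$ satisfies the homogeneous Neumann heat equation on $(0,\pi)$ for all $t\in\mathbb{R}$. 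Because $v$ is bounded in $\operatorname{H}^1$ for all times, expanding in the Neumann eigenbasis shows that all non-constant Fourier modes, which would blow up as $t\to-\infty$, must vanish, so $v(t)$ is a constant in $x$. The constraint $\gamma_\pi(v(0))=0$ then forces $\phi\equiv 0$, whence $\omega(u_0)=\{0\}$.

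Finally, combining $\omega(u_0)=\{0\}$ with the relative compactness of the orbit in $\operatorname{H}^1$ gives $\Phi_\beta(t,u_0)\to 0$ in $\operatorname{H}^1$ by the standard contradiction argument. The main technical obstacle is the parabolic smoothing step that upgrades boundedness in $\operatorname{H}^1$ to relative compactness in $\operatorname{H}^1$; this needs the analytic semigroup estimate $\|e^{-tA_N}\|_{\operatorname{H}^{-1}\to\operatorname{H}^s}\leq C\, t^{-(s+1)/2}$ for $0<t\leq 1$, applied to both the linear term and the bounded convolution term in \eqref{vcf}, after which the rest of the argument is a clean application of classical dynamical systems machinery.
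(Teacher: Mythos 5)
Your overall strategy is genuinely different from the paper's and is, in outline, viable. The paper never introduces $\omega$-limit sets; it works directly with the variation of constants formula, first identifying the limit $\lim_{t\to\infty}\int_0^t a(t-\tau)g_\beta\bigl(u(\tau,\pi)\bigr)\,d\tau=-\frac{1}{\pi}\int_0^\infty g_\beta\bigl(u(\tau,\pi)\bigr)\,d\tau$, deducing pointwise convergence $u(t,x)\to 0$ for every $x\in[0,\pi]$, and then upgrading to $\operatorname{H}^1$ convergence by showing that the series $\sum_n(1+n^2)|\hat u_n(t)|^2$ of spectral coefficients converges uniformly in $t$. Your backward-uniqueness argument on the $\omega$-limit set (trace zero at $x=\pi$ for all time forces a complete orbit to solve the homogeneous Neumann problem, and boundedness as $t\to-\infty$ kills all nonconstant modes, after which $\gamma_\pi(v)=0$ kills the constant one) is correct and is an elegant alternative that avoids the series manipulations.

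There is, however, one concrete gap: the parabolic smoothing step as you state it fails. In the scale generated by $A_N$ on $\operatorname{H}^{-1}$ one has $\|e^{-\sigma A_N}\delta_0\|_{\operatorname{H}^s}\leq C\,\sigma^{-(s+1)/2}$ for small $\sigma$, and since $(s+1)/2\geq 1$ precisely when $s\geq 1$, the resulting bound
$$
 \Big\|\int_0^t\tanh\bigl[\beta\gamma_\pi\bigl(u(\tau)\bigr)\bigr]\,e^{-(t-\tau)A_N}\delta_0\,d\tau\Big\|_{\operatorname{H}^s}\leq C\int_0^t(t-\tau)^{-(s+1)/2}\,d\tau
$$
has a divergent right-hand side for every $s\geq 1$. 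Boundedness of the nonlinearity alone therefore only yields eventual boundedness in $\operatorname{H}^s$ for $s<1$, which gives relative compactness in $\operatorname{L}^2(0,\pi)$ or $\operatorname{C}\bigl([0,\pi]\bigr)$ but not in $\operatorname{H}^1$ --- and relative compactness in $\operatorname{H}^1$ is exactly what your $\omega$-limit argument needs. The step can be repaired: either invoke the compactness of the global B-attractor $\mathcal{A}_\beta$ of \cite{CBOP03}, which attracts the bounded orbit and hence makes it asymptotically compact in $\operatorname{H}^1$; or use that $\tau\mapsto\tanh\bigl(\beta\gamma_\pi(u(\tau))\bigr)$ is H\"older continuous on $[\varepsilon,\infty)$ and apply the standard H\"older regularity theory for analytic semigroups to the convolution term. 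With either repair the remainder of your argument goes through.
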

\begin{proof}
  ``$\Rightarrow$'': If $\Phi _\beta (t,u_0)\to 0$ as $t\to\infty$, then
  the fact that $\gamma_\pi\in \mathcal{L}(\operatorname{H}^1,\mathbb{R})$ 
  implies the continuity of $\gamma_\pi$ and thus 
  $\gamma_\pi \bigl( \Phi_\beta(t,u_0)\bigr)\to 0$ as $t\to\infty$.\\[0.1cm]
  ``$\Leftarrow$'': If $u(t,\pi)\to 0$ as $t\to\infty$, then by
  \eqref{vcf} and \eqref{vie} we have that
  $$
   \lim _{t\to\infty}\Bigl[ u_N(t,\pi)+\int _0^t a(t-\tau)g_\beta
   \bigl( u(\tau,\pi)\bigr)\, d\tau \Bigr]=0
  $$
  which entails the convergence of the integral addend since
  $u_N(t,\pi)\to \bar u_0$ as $t\to\infty$. We claim that the limit
  can be identified, i.e. that
  $$
   \lim _{t\to\infty}\int _0^t a(t-\tau)g_\beta
   \bigl( u(\tau,\pi)\bigr)\, d\tau=-\frac{1}{\pi}\int_0^\infty
   g_\beta\bigl( u(\tau,\pi)\bigr)\, d\tau.
  $$
  Indeed, by assumption, given any $\varepsilon>0$, a time
  $t_\varepsilon>0$ can be found such that
  $$
   \big |g_\beta\bigl( u(\tau,\pi)\bigr)\big |\leq \varepsilon \text{ for
   }\tau\geq t_\varepsilon,
  $$
  since $g_\beta(0)=0$ and $g_\beta$ is continuous. Then, for any
  $t>t_\varepsilon$ we have that
  $$
  \int _0^t a(t-\tau)g_\beta\bigl( u(\tau,\pi)\bigr)\, d\tau=
  -\frac{1}{\pi}\int _0^t g_\beta\bigl( u(\tau,\pi)\bigr)\, d\tau+
  \int _0^t [a(t-\tau)+\frac{1}{\pi}]g_\beta\bigl( u(\tau,\pi)\bigr)\, d\tau.
  $$
  Next we use the fact that we know the full spectral resolution of
  the operator $A_N$ given by
  $$
   A_Nu=\sum _{k=1}^\infty k^2 \underset{\hat u_k}{\underbrace{\langle u,\varphi_k\rangle}}\varphi _k,\:
   u\in \operatorname{dom}(A_N),
  $$
  where the eigenfunctions are given by $\varphi
  _k(x)=\sqrt{\frac{2}{\pi}}\cos(kx)$ for $k\in \mathbb{N}$ and $x\in [0,\pi]$ and
  $\varphi_0\equiv\frac{1}{\sqrt{\pi}}$. This yields a representation of
  the semigroup as
  $$
   e^{-tA_N}u_0=\bar u_0+\sum _{k=1}^\infty\hat u_{0k}e^{-tk^2}\varphi_k.
  $$
  Now observe that
  $$
   \delta_0=\frac{1}{\pi}+\sum _{k=1}^\infty \langle \delta_0, \varphi_k \rangle
   \varphi _k=\frac{1}{\pi}+\sqrt{\frac{2}{\pi}}\sum _{k=1}^\infty\varphi _k
  $$
  with convergence in $\operatorname{H}^{-1}$, which yields
  $$
  a(t)=-\bigl( e^{-tA_N}\delta _0\bigr)(\pi)=
  -\frac{1}{\pi}-\sqrt{\frac{2}{\pi}}\sum_{k=1}^\infty e^{-tk^2}\varphi_k(\pi)
  = -\frac{1}{\pi}+\frac{2}{\pi}\sum_{k=1}^\infty(-1)^{k+1} e^{-tk^2}.
  $$
  Next we split the integral
  $$
   \int _0^t [a(t-\tau)+\frac{1}{\pi}]g_\beta\bigl( u(\tau,\pi)\bigr)\, d\tau
  $$
  into the integral up to $t_\varepsilon$ and the rest. Estimating
  separately, we see that
  \begin{align*}
   \Big | \int _0^{t_\varepsilon} g_\beta \bigl(
   u(\tau,\pi)\bigr)\bigl[ a(t-\tau)+\frac{1}{\pi}\bigr] \, d\tau\Big |
   &\leq\frac{2}{\pi}\sum _{k=1}^\infty \int _0^{t_\varepsilon}e^{-(t-\tau)k^2}\,
   d\tau\\&\leq\frac{2}{\pi}\sum _{k=1}^\infty
   \frac{1}{k^2}e^{-(t-t_\varepsilon)k^2}
   \leq c e^{-(t-t_\varepsilon)}, \: t>t_\varepsilon,
  \end{align*}
  since $\| g_\beta\|_\infty \leq 1$, and that
  \begin{align*}
   \Big | \int _{t_\varepsilon}^t g_\beta \bigl(
   u(\tau,\pi)\bigr)\bigl[ a(t-\tau)+\frac{1}{\pi}\bigr] \, d\tau\Big |
   &\leq c\sum _{k=1}^\infty \int _{t_\varepsilon}^t\big | g_\beta \bigl(
   u(\tau,\pi)\bigr)\big |e^{-(t-\tau)k^2}\,
   d\tau\\&\leq c\,\varepsilon \sum _{k=1}^\infty \frac{1}{k^2} \bigl(
   1-e^{-(t-t_\varepsilon)k^2}\bigr)\leq c\,\varepsilon , \:
     t>t_\varepsilon.
  \end{align*}
  This allows us to conclude that, given $\varepsilon>0$, there is
  $\tilde t_\varepsilon>0$ such that
  $$
   \Big |\int _0^t a(t-\tau)g_\beta
   \bigl( u(\tau,\pi)\bigr)\, d\tau+\frac{1}{\pi}\int_0^t
   g_\beta\bigl( u(\tau,\pi)\bigr)\, d\tau\Big |\leq \varepsilon 
  $$
  for $t>\tilde t_\varepsilon$, which yields the stated
  convergence. Next notice that, given $x\in [0,\pi)$, it holds that
  $$
   u(t,x)=u_N(t,x)+\int _0^t a(t-\tau,x)  g_\beta\bigl(
   u(\tau,x)\bigr)\, d\tau,
  $$
  where
  $$
  a(t,x)=-\bigl( e^{-tA_N}\delta _0 \bigr)(x)=
  -\frac{1}{\pi}+\frac{2}{\pi}\sum_{k=1}^\infty \cos(kx)e^{-tk^2}
  \text{ for }t\geq 0.
  $$
  A similar argument then shows that also for $x\in[0,\pi)$ and $t\to\infty$
  $$
   u(t,x)\to \bar u_0-\frac{1}{\pi}\int _0^\infty g_\beta\bigl(
   u(\tau,\pi)\bigr)\, d\tau=0\;. 
  $$
  The limit is $0$ since we already proved that
  $$
   \bar u_0=\frac{1}{\pi}\int _0^\infty g_\beta\bigl(
   u(\tau,\pi)\bigr)\, d\tau
  $$
  in the first step of the proof. 
  Note that, while $a(t,0)$ has a singularity in $t=0$, this singularity is integrable, 
  as shown in Remark \ref{rems}(c), and the argument
  goes through.
  We have thus shown that $\Phi _\beta (t,u_0)(x)\longrightarrow 0$ as $t\to\infty$ pointwise for each $x\in[0,\pi]\,$. 
  We now prove that convergence takes place in the topology of $\operatorname{H}^1\,$. 
  To that end use \eqref{vcf} to derive the equation satisfied by 
  $\hat u_n(t)$, which is the $n$-th coefficient in the expansion of the solution 
  $$
  u(t,x)=\sum_{k=1}^{\infty} \langle u(t,\cdot),\varphi_k \rangle \varphi_{k}(x)=
  \sum_{k=1}^{\infty} \hat u_k(t) \varphi_{k}(x),.
  $$
Observe that the $\operatorname{H}^1$ norm of a function $u$ as above is 
equivalent to
$$
 \| u \| _{\operatorname{H}^1}^2=\sum _{k=0}^\infty (1+k^2)|\hat u_k|^2\,.
$$
This is seen by extending $u$ to a periodic function $\tilde u$ by 
reflection, i.e. by setting $\tilde u(x)=u(2\pi-x)$ for $x\in[\pi,2\pi]$, and 
noticing the direct relation between the standard Fourier series of 
$\tilde u$ and the spectral basis expansion of $u\,$. We also use the fact that 
$u\in\operatorname{H}^1(0,\pi)$ if and only if $\tilde u\in\operatorname{H}^1_{per}(0,2\pi)\,$.  
For $n=0$ we have that
$$
 \hat u_0(t)=\bar u(t)=\bar u_0-\frac{1}{\pi}\int_0^t g_\beta 
 \bigl( u(\tau,\pi)\bigr)\, d\tau, 
$$
which converges to zero as $t$ approaches infinity as we have seen above.
For $n\geq 1$ one has that
$$
 \hat u_n(t)=e^{-tn^2}\hat u_{0n}-\sqrt{\frac{2}{\pi}}\int_0^t g_\beta \bigl(
 u(\tau,\pi)\bigr)e^{-(t-\tau)n^2}\, d\tau\:. 
$$
A simple calculation using the boundedness of $g_\beta$ then yields
$$
 (1+n^2)\big | \hat u_n(t)\big |^2\leq c(1+n^2) |\hat u_{0n}|+
 \frac{c}{n^4}(1+n^2),\: n\geq 1.
$$
This, together with the fact that $u_0\in\operatorname{H}^1$, implies 
that the series 
\begin{equation}\label{seriesh1}
    \sum_{n\geq 1}(1+n^2)|\hat u_n(t)|^2
\end{equation}
converges uniformly in $t\geq 0$. Arguing as in the first part of
the proof, i.e. by using the integral representation $\eqref{vcf}$ 
and splitting it, we obtain 
$$
 (1+n^2)|\hat u_n(t)|^2\to 0\text{ as }t\to\infty,
$$
for any $n\geq 0$ and we can infer that $u(t)\to 0$ in
$\operatorname{H}^1$ by combining this with the uniform
convergence of the series \eqref{seriesh1}. Note that the tail of the series can be made small 
uniformly in time, while the remaining finite sum can be estimated by the choice of 
a sufficiently large time.
\end{proof}
\section{Relationship to Feedback Control Problems}
No reference will be made to this section in the rigorous development
of the proof for the stability result. Nevertheless we include it before 
proceeding to the analysis of the asymptotic behaviour of the Volterra 
integral equation derived in the previous section. 
Our concern is that, otherwise, the proof may appear as a rather 
arbitrary succession of technical results for a Volterra integral 
equation, where, in the end, somewhat miraculously, the desired result 
emerges out of the blue. In addition the linear and streamlined 
exposition that results when omitting this section would not at all reflect 
the rather convoluted path which eventually led to this proof.  
Readers familiar with feedback control systems and distributed parameter systems, 
in particular those with knowledge of the celebrated Nyquist and Popov 
criteria may omit this section without fearing any gaps in rigour for the sequel of 
this study. We refer to \cite{K02}, \cite{CM09} for more information on 
feedback control systems.            

A classical $n\in \mathbb{N}$-dimensional single-input-single-output
(SISO) feedback system, which is also a special case of Lur'e system
(which allows for multiple inputs and outputs), takes the form
$$\begin{cases}
  \dot x= Ax+bu&\\
  y=c^\top x&\\
  u=h(y)
\end{cases}$$
for a matrix $A\in \mathbb{R} ^{n\times n}$, for vectors $b,c\in \mathbb{R}
^{n\times 1}$, and for a nonlinear function $h:\mathbb{R}\to
\mathbb{R}$. An important object in the study of such systems is
the so-called {\em transfer function} $G$ given by
$$
 G(s)=(s-A)^{-1}\mathcal{L}\bigl( h\circ y\bigr).
$$
It is obtained from the system by taking a Laplace transform $\mathcal{L}$ 
with zero initial condition. Assuming $h(0)=0$ yields the equilibrium 
$y\equiv 0$ and one is interested in its stability. This has attracted a 
great deal of interest, especially for the class of nonlinearities
satisfying a so-called {\em local [global] sector condition},
i.e. such that
$$
 \alpha z^2\leq zh(z)\leq \beta z^2\text{ for }z\in (a,b)\: [z\in
 \mathbb{R}]
$$
for real numbers $a<b$. Two stability conditions which emerged from the
research go by the name of the {\em circle} and the {\em Popov}
criteria, respectively. They can be formulated in terms of the
transfer function of the system. The present paper has drawn inspiration from
these developments of control theory in view of the possibility of
thinking of \eqref{tsACP} as an infinite dimensional SISO feedback
system. The correspondence is given by
$$
 x\to u,\: A\to -A_N,\: b\to\delta_0,\: c^\top\to\gamma_\pi\:(=\delta_\pi^\top),
 \text{ and }h(y)\to\tanh(\beta y),
$$
where $h$ satisfies a global sector condition with $\alpha=0$ and
$\beta>0$.\\

The above control system is a closed loop feedback system. In feedback 
control a successful strategy consists in deducing the properties of a 
closed loop system from the analysis of the corresponding open loop one 
obtained by simply cutting the feedback loop. 
The obtained open loop system is then just an input-ouput black box 
without feedback. When the output of the black box depends linearly on 
the applied input a so-called open loop frequency scan across all pure
sinusoidal input frequencies determines the amplitude and phase response 
of the output of the open loop system. The 2D open loop frequency response
diagram where output amplitude and output phase are plotted across all
frequencies is a workhorse in electrical and mechanical engineering. 
The method is known as the Bode or Nyquist stability plot. The importance 
of the Nyquist plot lies in the fact that the maximal feedback amplification
parameter still preserving asymptotic stability of the rest state, the 
so-called maximal gain, can be determined from the transfer function of 
the open loop system. Often it is simply read off from a Nyquist plot 
determined by empirical frequency response measurements.  
The Popov criterion applies to a wide class of nonlinear feedback systems. 
It combines the frequency response of the linear open loop system via its
transfer function with the sectorial parameters of the nonlinear feedback
function. One single combined criterion formulated in terms of the location 
of a certain curve in the complex plane (the Popov plot) provides a 
flexible and often optimal stability analysis tool.   
\subsection{The nonlocal linear feedback problem}
To illustrate this approach we first study the following linear open loop 
system related to problem \eqref{tsEq}
\begin{equation}\label{OLSnonl}
  \begin{cases}
    u_t-u_{xx}=0 &\text{in }(0,\infty)\times(0,\pi),\\
    u_x(t,0)=-f(t) &\text{for }t\in(0,\infty),\\
    u_x(t,\pi)=0 &\text{for }t\in(0,\infty),\\
    u(0,\cdot)\equiv 0 &\text{in }(0,\pi).
  \end{cases}
\end{equation}
Here $f(t)\in \operatorname{BC}(\mathbb{R}^+,\mathbb{R})$ is interpreted as 
the input to the system at $x=0$. Note that we choose the equilibrium state
$u_{0}\equiv 0$ as the initial condition.
The output is obtained by measuring the temperature at $x=\pi$ by taking the
trace of the solution $u(t,u_0)$ of the initial value \eqref{OLSnonl} which is
given by 
$$
  u(t,u_0) =e^{-tA_N}u_0 + \int _0^t e^{-(t-\tau)A_N} \gamma_0' 
  \bigl( f(\tau) \bigr) \, d\tau =\int _0^t f(\tau) e^{-(t-\tau)A_N} 
  \delta_0\, d\tau
$$
Hence the system output $g(t)$ is given by

\begin{equation}\label{OLSnonlout}
g(t):=\gamma_{\pi}\bigl( u(t,u_0) \bigr) = \int _0^t f(\tau) \bigl[  e^{-(t-\tau)A_N} \delta_0\bigr](\pi)  \, d\tau \,.
\end{equation}
In the previous section we discussed that 
$$
a(t)=-\bigl[  e^{-tA_N} \delta_0\bigr](\pi)=-\frac{1}{\pi}+\frac{2}{\pi}\sum_{k=1}^\infty(-1)^{k+1} e^{-tk^2}  
$$
and therefore the output is computed by convolution of the input with
the kernel $-a$
\begin{equation}\label{conv1}
g(t)=-\int _0^t a(t-\tau)f(\tau) \, d\tau \,= \,(-a*f)(t).
\end{equation}
We can now apply the Laplace transform to both sides of \eqref{conv1} to obtain
\begin{equation}\label{Transf1}
\mathcal{L}(g)=\mathcal{L}(-a)\mathcal{L}(f).
\end{equation}
The transfer function is defined as $G_{\text{nloc}}:=\mathcal{L}(-a)$ 
and was already computed in the previous section
\begin{equation}\label{Transf2}
G_{\text{nloc}}(s)=
\mathcal{L}\Bigl( \frac{1}{\pi}+\frac{2}{\pi}\sum_{k=1}^\infty(-1)^{k}
e^{-tk^2}\Bigr)(s)=\frac{1}{\pi s}+\frac{2}{\pi}\sum _{k=1}^\infty
\frac{(-1)^k}{s+k^2}= \frac{1}{\sqrt{s}\,\sinh(\pi\sqrt{s})}.
\end{equation}
The closed loop feedback system with gain $\beta$ corresponding to
\eqref{OLSnonl} is given by
\begin{equation}\label{CLSnonl}
  \begin{cases}
    u_t-u_{xx}=0&\text{in }(0,\infty)\times(0,\pi),\\
    u_x(t,0)=\beta u(t,\pi) &\text{for }t\in(0,\infty),\\
    u_x(t,\pi)=0&\text{for }t\in(0,\infty),\\
    u(0,\cdot)\equiv 0&\text{in }(0,\pi),
  \end{cases}
\end{equation}
In feedback control analysis, the closed loop system transfer function is
obtained from the open loop transfer function. Adopting the usual definition
from finite dimensional feeback systems heuristically, we set
$$
 G_{\text{nloc}}^{\text{cl-loop}}(\cdot,\beta)=
 \frac{G_{\text{nloc}}}{1+\beta\,G_{\text{nloc}}}.
$$
It is clear that the zeros of the closed loop transfer function and the 
zeros of the open loop transfer function coincide. 
The poles of the closed loop transfer function correspond to the set 
$$
 \big\{s\in  \mathbb{C} \big | \beta\,G_{\text{nloc}}(s) = -1\big\}.
$$
This leads to the use of Rouch\'e's theorem to derive the Nyquist stability
criterion. It gives sufficient conditions for the stability of the trivial
equilibrium in terms of a vanishing winding number of the Nyquist curve 
$$
 \text{Nyquist}\bigl( \beta\,G_{\text{nloc}} \bigr):=\big\{ 
 \beta\,G_{\text{nloc}}(i\omega) \,\big |\, \omega\in 
 \mathbb{R}\big\}\,,
$$
around the point $(-1,0)$ in the complex plane. In Figure
\ref{fignyqnonloc} the Nyquist plot is computed numerically
for $\beta=1$ from the representation
\begin{equation}\label{explicitrep2}
 G_{\text{nloc}}(i\omega)=\frac{1}{\sqrt{i\omega}\,
 \sinh(\pi\sqrt{i\omega})}
\end{equation}
for numerical values in $\omega\in (-50,-0.2)\cup (0.2,50)\,$. 
A semi-circle in the complex right half-plane of radius $0.2$ is followed to avoid the singularity at zero. 
This leads to a closed Nyquist curve. 
\begin{figure}
\centering
        \includegraphics[totalheight=8cm]{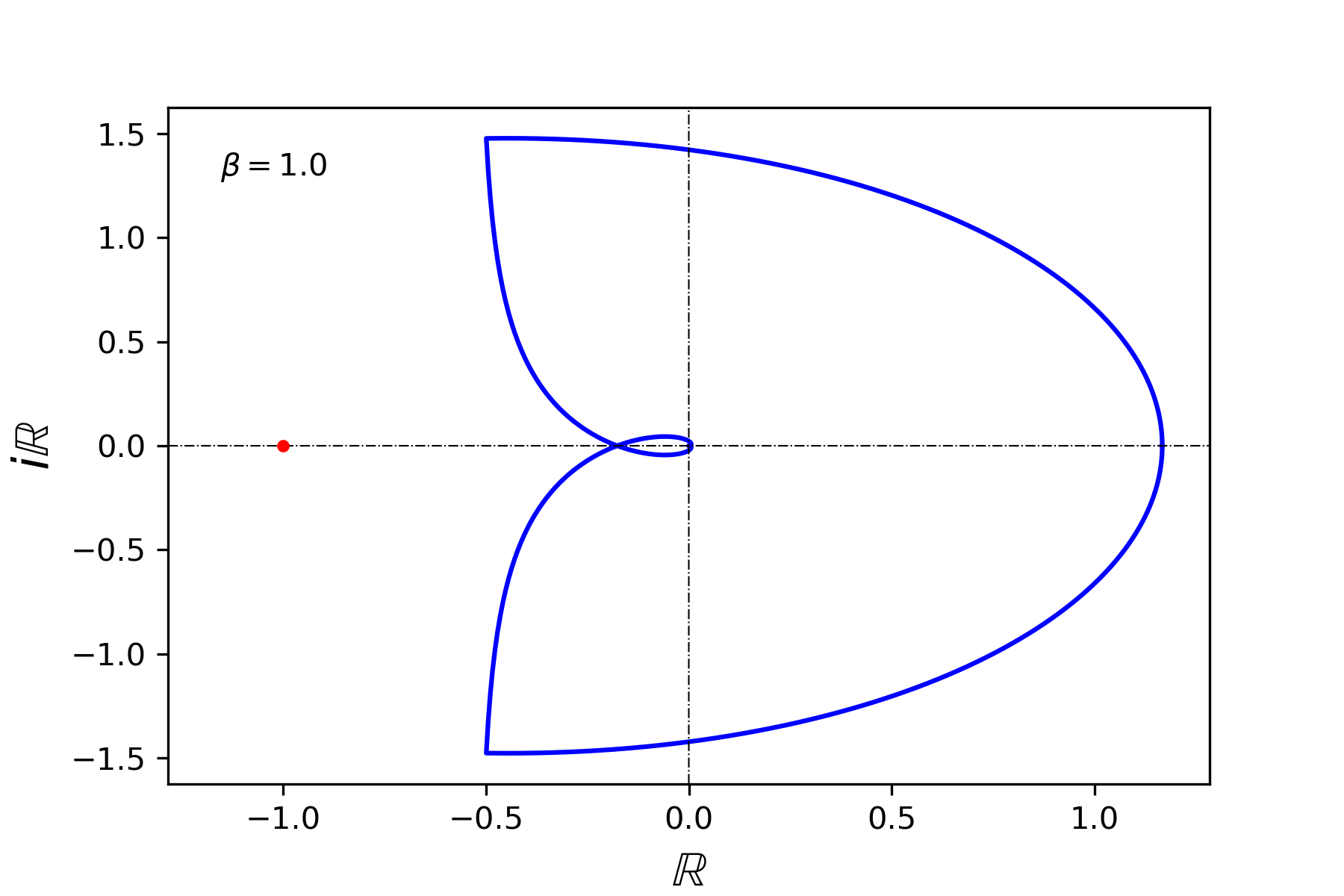}
    \caption{Nyquist plot $\text{Nyquist}\bigl(\,G_{\text{nloc}} \bigr)\,$. The limited resolution 
    hides the infinitely many windings of the curve around the origin as $\omega\to\pm\infty\,$. }
    \label{fignyqnonloc}
\end{figure}
A solution $(\omega_{0},\beta_{0})$ such that $G_{\text{nloc}}(i\omega_{0})
\in (-\infty,0)$ and 
$$
\beta_{0}\,G_{\text{nloc}}(i\omega_{0})=-1  
$$
leads to a critical parameter value $\beta_0\,$. It can be found from the condition 
$$
 \operatorname{Im}\bigl[ G_{\text{nloc}}(i\omega_0)\bigr]=
 -\frac{1}{\pi \omega_0}-\frac{2}{\pi}\sum _{k=1}^\infty (-1)^k
 \frac{\omega_0}{\omega_0^2+k^4}=0
$$
or, equivalently, from
\begin{equation}\label{NyCond}
 \sum _{k=1}^\infty \frac{(-1)^{k-1}}{1+k^4/\omega_0^2}
 =\frac{1}{2}\;.
\end{equation}
It appears that \eqref{NyCond} possesses infintely many real solutions. 
In fact, by using the explicit representation \eqref{explicitrep2} a calculation that 
separates real and imaginary parts of the denumerator in \eqref{explicitrep2} and 
uses the expression for the imaginary part of $\frac{1}{a+ib}$ leads to an 
explicit equivalent expression for the condition \eqref{NyCond}, given by 
$$
\sinh(\pi\sqrt{\frac{\omega}{2}})\cos(\pi\sqrt{\frac{\omega}{2}})+
\cosh(\pi\sqrt{\frac{\omega}{2}})\sin(\pi\sqrt{\frac{\omega}{2}})=0\:.
$$
Clearly this is equivalent to the condition already found in \cite{GM97} (Remarks 4.2.)
$$
\tan(\pi\sqrt{\frac{\omega}{2}})=-\tanh(\pi\sqrt{\frac{\omega}{2}})
$$
and shows that $\eqref{NyCond}$ has infinitely many solutions.\\ 
Picking the smallest solution, which occurs for $\omega_0 \approx 1.13344388$, 
it is found by inserting, that $G_{\text{nloc}}(i\omega_0)\approx -0.17650842\,$. 
The critical value for $\beta_0$ is finally obtained from 
$$
 \beta_{0}=-\frac{1}{G_{\text{nloc}}(i\omega_0)}=-\sqrt{i\omega_0}\,
 \sinh(\pi\sqrt{i\omega_0})\approx 5.6655. 
$$
Hence this formal application of the Nyquist criterion suggests that the 
zero state of \eqref{CLSnonl} is asymptotically stable for 
$\beta \in (0,\beta_0)$. 
This stability interval determined heuristically is identical to the 
(linear) stability interval determined rigorously in \cite{GM97} by fully
describing the spectrum of the generator of the linear semigroup
associated with \eqref{CLSnonl}.


\subsection{The local linear feedback problem}
We also briefly discuss the local version of \eqref{OLSnonl} and also refer to \cite{CM09} where a similar stability analysis of this problem can be found. 
We start again with an input signal $f(t)$ that enters as before:  
\begin{equation}\label{OLSloc}
  \begin{cases}
    u_t-u_{xx}=0 &\text{in }(0,\infty)\times(0,\pi),\\
    u_x(t,0)=-f(t) &\text{for }t\in(0,\infty),\\
    u_x(t,\pi)=0 &\text{for }t\in(0,\infty),\\
    u(0,\cdot)\equiv 0 &\text{in }(0,\pi).
  \end{cases}
\end{equation}
Now the output is obtained by measuring the temperature at $x=0$ by taking the
trace $\gamma_0$ of the solution $u(t,u_0)$ of the initial value \eqref{OLSloc}. 
Hence the system output $g(t)$ is given by
\begin{equation}\label{OLSlocout}
g(t):=\gamma_{0}\bigl( u(t,u_0) \bigr) = \int _0^t f(\tau) \bigl[  e^{-(t-\tau)A_N} \delta_0\bigr](x=0)  \, d\tau \,.
\end{equation}
For this local version we now determine the kernel by evaluating at $x=0$  
$$
a_{\text{loc}}(t):=-\bigl[  e^{-tA_N} \delta_0\bigr](x=0)=-\frac{1}{\pi}-\frac{2}{\pi}\sum_{k=1}^\infty\, e^{-tk^2} \,. 
$$
Again the output is given by convoluting the input with the kernel $a_{\text{loc}}$
\begin{equation}\label{conv1}
g(t)=-\int _0^t a_{\text{loc}}(t-\tau)f(\tau) \, d\tau \,= \,(-a_{\text{loc}}*f)(t)
\end{equation}
and by applying the Laplace transform
\begin{equation}
\mathcal{L}(g)=\mathcal{L}(-a_{\text{loc}})\mathcal{L}(f).
\end{equation}
The transfer function is defined as $G_{\text{loc}}:=\mathcal{L}(-a_{\text{loc}})$ and 
it can be expressed as
\begin{equation}\label{Transf2}
G_{\text{loc}}(s)=
\mathcal{L}\Bigl( \frac{1}{\pi}+\frac{2}{\pi}\sum_{k=1}^\infty
e^{-tk^2}\Bigr)(s)=\frac{1}{\pi s}+\frac{2}{\pi}\sum _{k=1}^\infty
\frac{1}{s+k^2}= \frac{\cosh(\pi\sqrt{s}))}{\sqrt{s}\,\sinh(\pi\sqrt{s})}.
\end{equation}
The local closed loop feedback system with gain $\beta$ is now given by
\begin{equation}
  \begin{cases}
    u_t-u_{xx}=0&\text{in }(0,\infty)\times(0,\pi),\\
    u_x(t,0)=\beta u(t,0) &\text{for }t\in(0,\infty),\\
    u_x(t,\pi)=0&\text{for }t\in(0,\infty),\\
    u(0,\cdot)\equiv 0&\text{in }(0,\pi).
  \end{cases}
\end{equation}
In Figure \ref{fignyqloc} we compute the Nyquist plot from the representation \eqref{Transf2} and observe that the Nyquist plot lies in the right complex half-plane. 
Therefore the stability range for the local problem obtained from the above heuristic application of the Nyquist
criterion is $\beta\in(0,\infty)\,$. We point out that this already follows rigorously from the analysis of the spectrum of the generator of the linear semigroup, 
as is discussed in \cite{GM97} and \cite{GM99}. In fact, for a wide class of 
nonlinear parabolic evolution problems in one space dimension with ``local'' or ``separated'' boundary conditions, the
general results of P. Pol\'a\v{c}ik et. al. (see e.g. \cite{BrPolSan92}) show that
all orbits converge to steady states, whenever a compact global attractor is known to  exist.
\begin{figure}
\centering
    \includegraphics[totalheight=8cm]{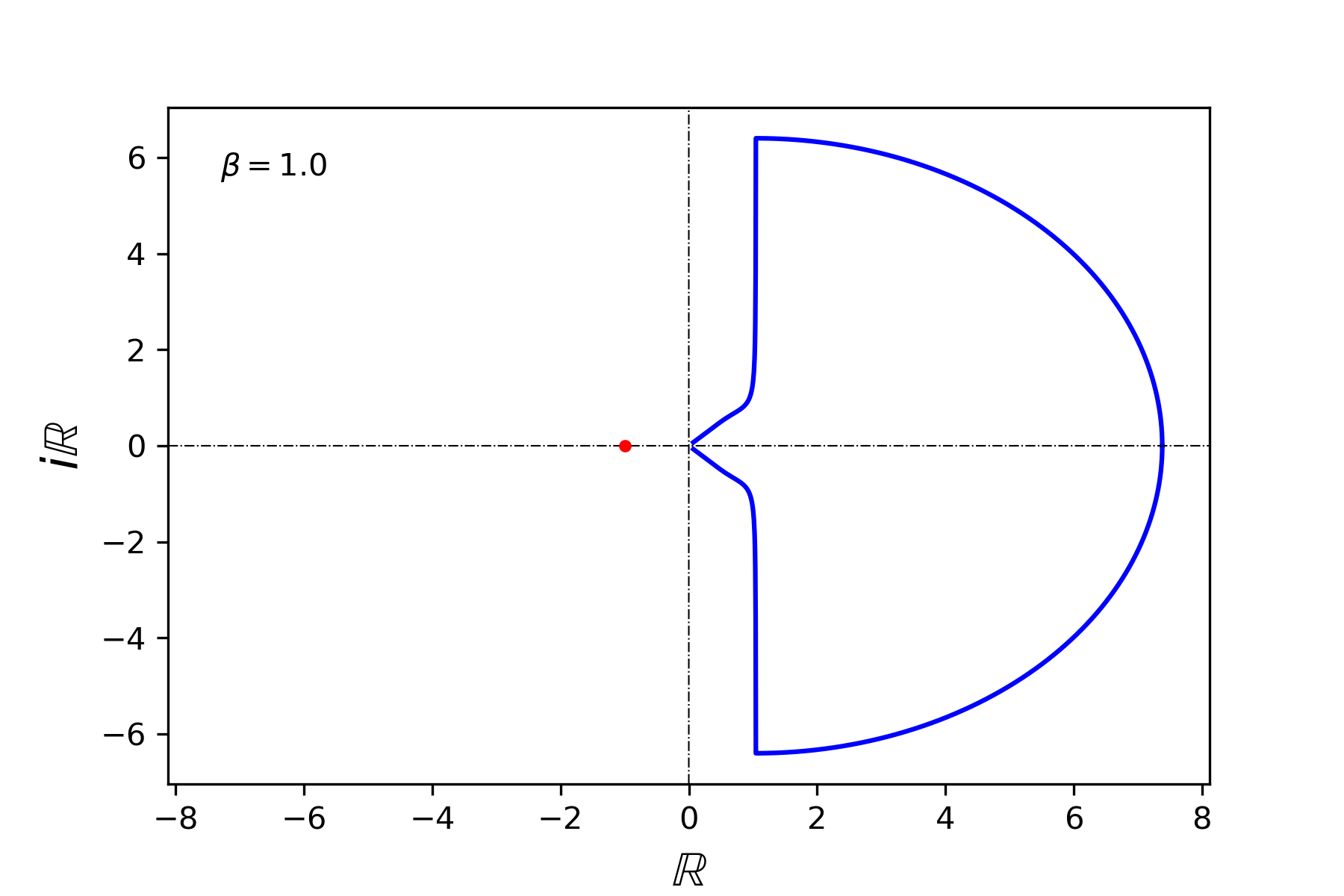}
    \caption{Nyquist plot of the transfer function $G_{\text{loc}}\,$.}
    \label{fignyqloc}
\end{figure}
\subsection{Nyquist criterion and spectral linear stability analysis}\label{NyqLinStab}
This short section aims at drawing a formal connection (with no proofs) between the
mathematical linear stability analysis originally performed in
\cite{GM97} and the Nyquist diagram approach found in
applications and just described. It is known from semigroup theory
\cite{Gol85} that the Laplace transform of a strongly continuous
semigroup $\big\{ T_A(t)\, |\, t\geq 0\big\}$ on a Banach space $E$ is
the resolvent of its generator, i.e. that 
$$
 \mathcal{L}\bigl( T_A(\cdot)\bigr)(s)=(s-A)^{-1}\text{ for }s\in \rho(A),
$$
with $s\geq \omega(A)$, where $\omega(A)$ is the smallest real number
for which $\|T_A(t)\|_{\mathcal{L}(E)}\leq M\, e^{\omega t}$ for
$t\geq 0$ and some constant $M\geq 1$. This formula hints at the fact
that the poles of the resolvent are the modes of decay/growth of
$T_A$, which, in turn, correspond to the eigenvalues of $A$. For the
semigroup $e^{-\cdot A_N}$ the eigenvalues $\lambda\in \mathbb{C}$ are
determined by the equation
$\sqrt{\lambda}\sin(\pi\sqrt{\lambda})=\beta$ as follows from
\cite{GM97}, while $\bar u=\mathcal{L}(u)$ satisfies the identity
\begin{equation}\label{ubar}
 \bar u(s,x)=\bigl[ (s+A_N)^{-1}u_0\bigr] (x)-\beta
 \bar u(s,\pi) \bigl[ (s+A_N)^{-1}\delta _0\bigr] (x)
\end{equation}
as follows from the weak formulation 
$$
 (s+A_N)\bar u=u_0-\beta \bar u(\pi)\delta_0\text{ in}\operatorname{H}^{-1}, \bar u\in \operatorname{H}^1,
$$
of the Laplace transform of \eqref{tsEq} given by
\begin{equation*}\begin{cases}
    s \bar u-u_0-\bar u_{xx}=0&\text{ in }(0,\pi),\\
    \bar u_x(0)=\beta \bar u(\pi),&\\
    \bar u_x(\pi)=0,&
\end{cases}
\end{equation*}
which confirms that
$$
 \mathcal{L}\bigl( e^{-t A_N}\delta_0\bigr)(s) = \bigl[(s+A_N)^{-1}\delta _0\bigr](\pi)=\frac{1}{\sqrt{s}\sinh(\pi \sqrt{s})}
$$
is the transfer function of the open loop control system. Equation
\eqref{ubar} allows one to recover $\bar u(s,\pi)$ by simple
evaluation at $x=\pi$ to get
$$
 \bar u(s,\cdot)=(s+A_N)^{-1}u_0-\beta
 \frac{\bigl[(s+A_N)^{-1}u_0\bigr](\pi)}{1+\beta\bigl[(s+A_N)^{-1}\delta
 _0\bigr](\pi)}(s+A_N)^{-1}\delta _0,
$$
an explicit representation of the solution of the closed loop
system. Notice that, after evaluation at $x=\pi$, equation
\eqref{ubar} is simply the Laplace transform of the
Volterra integral equation \eqref{vie} in the linear case. Now the
modes of decay (or growth) of $u$ are fully determined by the poles of
$\bar u$, which, in turn, are determined by the zeros of $1+\beta
\bigl[(s+A_N)^{-1}\delta _0\bigr](\pi)$, since these are the only poles of $\bar u$
which change with $\beta$. Since the Nyquist plot is a complex analytical
procedure to identify poles in the ``bad'' half-plane, it ends up
being a method to check when stability is lost for the linear problem
and, hence, amounts to a linearized stability criterion for the
original nonlinear system, in this case.


\subsection{The Popov criterion applied to the nonlocal and nonlinear
  feedback problem} 
The Popov criterion for a nonlinearity satisfying the global sector
condition $[0,\beta]$ is stated in terms of the transfer function of
the linear open loop system.
Applied to our transfer function the Popov criterion requires that,
for a given parameter $\beta>0$, there exists a constant $q(\beta)>0$
such that, for $\omega\in\mathbb{R}$,
\begin{equation}\label{popcrit1}
\operatorname{Re}\bigl[ G_{\text{nloc}}(i\omega)\bigr] - 
q\omega\operatorname{Im}\bigl[ G_{\text{nloc}}(i\omega)\bigr] > -\frac{1}{\beta}\;.    
\end{equation}
 If we define the Popov plot by 
\begin{equation}\label{popcrit2}
  P(G_{\text{nloc}}):= \Big\{\operatorname{Re}\bigl[
  G_{\text{nloc}}(i\omega) \bigr] + i \omega \operatorname{Im}
  \bigl[ G_{\text{nloc}}(i\omega) \bigr] \,\Bigl |\, \omega > 0\Big\}\,
\end{equation}
then \eqref{popcrit1} is equivalent to the existence of a constant
$q>0$ such that the straight line in the complex plane with slope $q$
through the real point $-\frac{1}{\beta}$ lies to the left of the
Popov plot $P(G_{\text{nloc}})$. If the criterion is fulfilled, then
any nonlinearity in the sector $[0,\beta]$ leads to an asymptotically
stable nonlinear feedback system. Note that our particular
nonlinearity $\tanh(\beta \cdot)$ satifies the sector condition
$[0,\beta]$ and therefore the nonlinear evolution problem
\eqref{tsEq}, at least formally, falls into the class of nonlinear
feedback systems covered by the Popov criterion.

The Popov plot \eqref{popcrit2} can easily be computed numerically
from the explicit representation \eqref{Transf2} and is displayed in
Figure \ref{figpopov}. Since the Popov plot crosses the real line at
$-\frac{1}{\beta_{0}}$ where $\beta_{0}\approx 5.6655$ is the constant
already discussed above, the criterion suggests that the parameter
range of stability is given by $(0,\beta_{0})\,$.
In the next section we will prove this result by studying the
nonlinear Volterra integral equation \eqref{vie}.\\
In the course of our rigorous analysis, which will be resumed in the
next section, the application of the Parseval-Plancherel identity will
lead to a Fourier representation of a stability condition in terms of
the kernel of the integral equation. The stability condition obtained
in this way then turns out to be equivalent to the Popov criterion and
allows for a rigorous proof of the above heuristic argument via the
integral equation.
\begin{figure}[H]
\centering
        \includegraphics[totalheight=8cm]{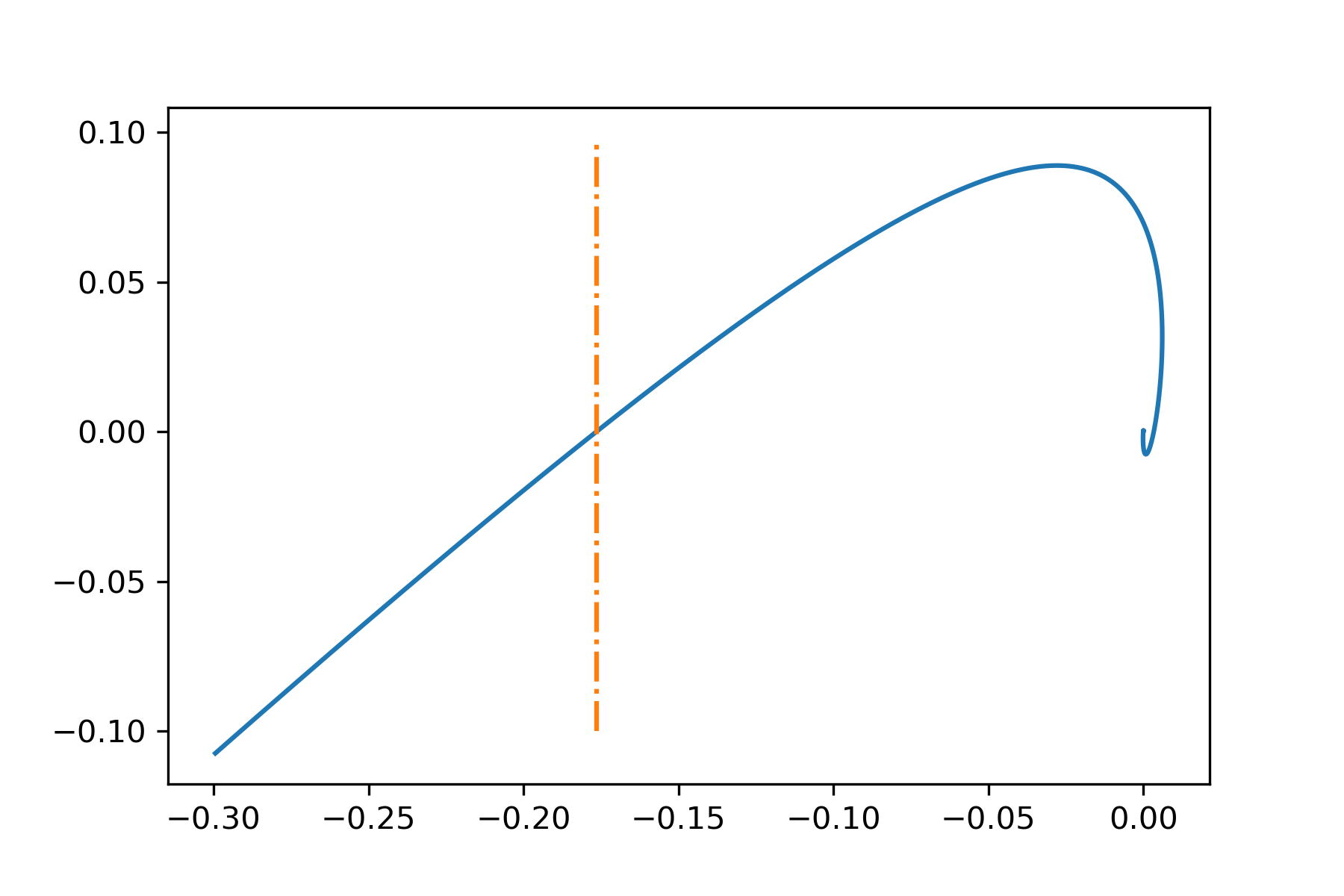}
    \caption{Popov plot $P\bigl(\,G_{\text{nloc}} \bigr)\,$. The limited resolution 
    hides the infinitely many windings of the curve around the origin as $\omega\to\infty\,$. }
    \label{figpopov}
\end{figure}
\section{Asymptotic Stability for the Integral Equation}
The objective of this section is the proof of the following
result. The ideas originate in \cite{Mi71} and are adapted to obtain the
following proposition. In particular, we have adapted the expression for 
$W_1$ in \cite{Mi71} by subtracting the $\beta$-dependent term. 
\begin{prop}\label{vieSol20}
Fix $\beta\in(0,\beta_0)$ and $u_0\in \operatorname{H}^1$. Let 
$y\in\operatorname{BC}\bigl( (0,\infty),\mathbb{R}\bigr)$ be a
solution  of the integral equation \eqref{vie}. Then
$\lim_{t\to\infty}y(t)=0$. 
\end{prop}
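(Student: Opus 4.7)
The plan is to follow the Popov-type argument for Volterra integral equations developed by Miller, using the equivalent frequency-domain condition \eqref{M} established in Remark \ref{rems}(i). Write $\phi(t):=g_\beta(y(t))=\tanh(\beta y(t))$, so that $|\phi|\le 1$, and let $\Phi(t):=\int_0^t\phi(\tau)\,d\tau$. Using the decomposition $a = a_s - 1/\pi$, together with $a_s(0^+)=1/\pi$ from Remark \ref{rems}(d), the integral equation \eqref{vie} and its formal time derivative become
\[
 y(t)-f(t)=-\tfrac{1}{\pi}\Phi(t)+(a_s\ast\phi)(t),\qquad y'(t)-f'(t)=(a_s'\ast\phi)(t),
\]
where the cancellation $a_s(0^+)-1/\pi=0$ kills the boundary term when differentiating the convolution.

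Next, choose $q>0$ so that \eqref{M} holds with a uniform gap $\varepsilon>0$; such a $q$ exists for $\beta\in(0,\beta_0)$ by the Popov-plot analysis of Section 3, since $\mathrm{Re}\,\hat a_s$ and $\mathrm{Re}\,\widehat{a_s'}$ vanish as $|\omega|\to\infty$. Form the Popov functional
\[
 V(T):=\int_0^T\phi(t)\,[y(t)-f(t)]\,dt + q\int_0^T\phi(t)\,[y'(t)-f'(t)]\,dt.
\]
Substituting the two identities and using $\int_0^T\phi\,\Phi\,dt=\tfrac12\Phi(T)^2$, one rewrites
\[
 V(T)+\tfrac{1}{2\pi}\Phi(T)^2 = \int_0^T\phi\,(a_s\ast\phi)\,dt + q\int_0^T\phi\,(a_s'\ast\phi)\,dt.
\]
Setting $\phi_T:=\phi\,\chi_{[0,T]}$, Parseval--Plancherel turns the right-hand side into $\tfrac{1}{2\pi}\int_{\mathbb R}[\mathrm{Re}\,\hat a_s(\omega)+q\,\mathrm{Re}\,\widehat{a_s'}(\omega)]\,|\hat\phi_T(\omega)|^2\,d\omega$, which by \eqref{M} is at most $(1/\beta-\varepsilon)\,\|\phi_T\|_{L^2}^2$.

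For a matching lower bound on $V(T)$, the sector inequality $\tanh(\beta z)\,z\ge\tanh(\beta z)^2/\beta$ yields $\int_0^T\phi\,y\,dt\ge\|\phi_T\|_{L^2}^2/\beta$. The term $q\int_0^T\phi\,y'\,dt$ is a total derivative, equal to $q[G(y(T))-G(y(0))]$ with $G(z):=\beta^{-1}\log\cosh(\beta z)$, hence bounded by boundedness of $y$. The forcing contributions $\int_0^T\phi f\,dt$ and $\int_0^T\phi f'\,dt$ decompose into an absolutely bounded part (because $f-\bar u_0\in L^1$ and $f'\in L^1$ by Remark \ref{rems}(b)) plus the single non-integrable piece $\bar u_0\Phi(T)$. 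Combining the two-sided bounds on $V(T)$ produces
\[
 \varepsilon\,\|\phi_T\|_{L^2}^2+\tfrac{1}{2\pi}\Phi(T)^2 \le |\bar u_0|\,|\Phi(T)|+C,
\]
from which, by completing the square, $\phi\in L^2(0,\infty)$ and $\Phi\in L^\infty(0,\infty)$, both uniformly in $T$. Since $y$ is bounded and $|\tanh(\beta z)|\ge c_M|z|$ on the range of $y$, this also gives $y\in L^2(0,\infty)$.

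To conclude, the identity $y'-f'=a_s'\ast\phi$ with $a_s'\in L^2$ and $\phi\in L^2\cap L^\infty$ yields $y'\in L^\infty$, so $y$ is uniformly continuous on $[0,\infty)$; the standard Barbalat-type implication (a uniformly continuous $L^2$ function must vanish at infinity) then gives $y(t)\to 0$. The main technical difficulty is twofold: first, justifying Parseval--Plancherel cleanly for one-sided convolutions against the (merely $L^2$, not compactly supported) truncation $\phi_T$; second, absorbing the non-integrable drift $\bar u_0\ne 0$ of $f$ into the quadratic term $\Phi(T)^2/(2\pi)$ produced by the shifted kernel---this completion of squares, which depends crucially on both contributions arising from the same decomposition $a=a_s-1/\pi$, is what ultimately makes the Popov machinery compatible with the fact that the kernel $a$ is not integrable.
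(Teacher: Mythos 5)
Your proposal is correct and follows essentially the same route as the paper: the same Popov/Miller functional (your $V(T)+\tfrac{1}{2\pi}\Phi(T)^2$ is exactly the paper's $W_{\beta,q}-R_{\beta,q}$ reorganized), the same Parseval--Plancherel reduction to the frequency condition \eqref{M}, and the same quadratic-in-$\Phi(T)$ argument to absorb the non-integrable drift coming from the decomposition $a=a_s-\tfrac{1}{\pi}$ and from $f$. The only differences are organizational: you extract $\phi\in\operatorname{L}^2$ from a strict gap in \eqref{M} and apply the Barbalat-type argument to $y$ itself, whereas the paper keeps only $R_{\beta,q}\le 0$, bounds $V_{\beta,q}$ by an integration by parts, and applies the same Barbalat-type argument to the integrable, uniformly continuous sector term $g_\beta(y)\bigl[y-g_\beta(y)/\beta\bigr]$; both endgames are valid, and both rest on the same (numerically supported) existence of $q(\beta)$ as in Lemma \ref{negLem}.
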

For the proof we need a series of lemmas and an auxiliary function.
\begin{deff}
For $\beta,q\in(0,\infty)$ and $y\in\operatorname{BC}\bigl(
[0,\infty),\mathbb{R}\bigr)$ set
$$
 W_{\beta,q}(y)(t):=\sum_{i=1}^3W_i(y)(t),\: t\geq 0,
$$
where
\begin{align*}
 W_1(y)(t)&:=\int _0^t g_\beta \bigl( y(\tau)\bigr)\Bigl[
 y(\tau)-\frac{g_\beta \bigl( y(\tau)\bigr)}{\beta}\Bigr]\, d\tau,
 \\W_2(y)(t)&:=q\,G_{\beta} \bigl( y(t)\bigr)\text{ for }G_{\beta}(z):=\int _0^z
 g_\beta(\zeta)\, d\zeta,\\
 W_3(y)(t)&:=\frac{1}{2\pi}\Bigl( \int _0^t g_\beta \bigl(
 y(\tau)\bigr)\, d\tau \Bigr)^2 .
\end{align*}
Note that in our notation we may not always explicitly indicate the dependence on $\beta$ and
$q$ in the notation for $W_i$, $i=1,2,3$ nor the dependence
on the function $y$. 
\end{deff}
\begin{lem}\label{nonneg}
Let $\beta,q\in(0,\infty)$ and $y\in\operatorname{BC}\bigl(
[0,\infty),\mathbb{R}\bigr)$. Then
$$
 W_i(t)\geq 0,\: t\geq 0,\: i=1,2,3.
$$
It therefore also holds that $W_{\beta,q}\geq 0$ for $t\geq 0$.
\end{lem}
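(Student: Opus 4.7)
The plan is to verify the non-negativity of each $W_i$ separately; all three follow from elementary properties of $\tanh$, and the only substantive observation is the sector condition for $g_\beta$. No genuine obstacle is expected.

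First I would handle $W_3$, which is immediate: it is a non-negative constant $\frac{1}{2\pi}$ times the square of a real number, hence $\geq 0$ for all $t \geq 0$.

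Next I would address $W_2$. Since $g_\beta(\zeta) = \tanh(\beta\zeta)$ has the same sign as $\zeta$ (and vanishes at $0$), the antiderivative
\[
G_\beta(z) = \int_0^z \tanh(\beta\zeta)\,d\zeta = \tfrac{1}{\beta}\ln\cosh(\beta z)
\]
is non-negative for every $z\in\mathbb{R}$ (either by the sign argument, noting that integration is taken in the positive direction when $z>0$ and the negative direction when $z<0$, or by the explicit closed form together with $\cosh\geq 1$). Multiplying by $q>0$ gives $W_2(y)(t)\geq 0$.

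For $W_1$, the key point is the sector condition $0\leq g_\beta(w)w \leq \beta w^2$ for all $w\in\mathbb{R}$. Indeed, $\tanh(\beta w)$ has the same sign as $w$, so $g_\beta(w)w \geq 0$; and the elementary inequality $|\tanh x|\leq |x|$ gives $|g_\beta(w)/\beta|\leq |w|$ with $g_\beta(w)/\beta$ having the sign of $w$, so $w - g_\beta(w)/\beta$ again has the sign of $w$. Consequently the integrand
\[
g_\beta(y(\tau))\Bigl[y(\tau)-\tfrac{1}{\beta}g_\beta(y(\tau))\Bigr]
\]
is a product of two quantities with the same sign as $y(\tau)$, hence pointwise non-negative, and $W_1(y)(t)\geq 0$ follows by monotonicity of the integral. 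Combining the three estimates yields $W_{\beta,q}(y)(t)\geq 0$ for all $t\geq 0$.
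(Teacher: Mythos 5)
Your proof is correct and follows essentially the same route as the paper: $W_3$ is a square, $W_2$ is nonnegative because $g_\beta(z)z\geq 0$, and $W_1$ is handled by the sector condition, which the paper phrases as $\frac{g_\beta(y)}{\beta y}\in(0,1)$ and you phrase equivalently as $|\tanh x|\leq|x|$ together with the sign agreement of both factors. No gaps.
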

\begin{proof}
The nonnegativity of $W_1$ follows from its nonnegative integrand. 
First note that $g_{\beta}(y)=0$ for $y=0$ and hence the integrand 
vanishes for $y=0$. For $y\neq0$ we can express the integrand as
$$
\frac{1}{\beta}\,g_{\beta}(y)\beta\, y \bigl[ 1 -\frac{g_{\beta}(y)}{\beta\,y} \bigr]
$$
and the positivity of this expression follows from $g_{\beta}(y)\beta\, y\,>0$ and 
from $\frac{g_{\beta}(y)}{\beta\,y}\in(0,1)\,$. 
Also the nonnegativity of $W_{2}$ is a consequence of 
$g_{\beta}(y)\, y\,>0$ for $y\neq0\,$. 
\end{proof}

\begin{lem}\label{VandR}
Let $\beta,q\in(0,\infty)$ and let $y\in\operatorname{BC}\bigl(
(0,\infty),\mathbb{R}\bigr)$ be a solution of the integral equation
\eqref{vie} with $u_0\in \operatorname{H}^1\,$. Then
\begin{equation}\label{Lyap}
  W_{\beta,q}(t)=V_{\beta,q}(t)+R_{\beta,q}(t),\: t\geq 0\,,
\end{equation}
where
$$
 V_{\beta,q}(t):=\int _0^t g_\beta \bigl( y(\tau)\bigr)\bigl[
 f(\tau)+qf'(\tau)\bigr]\, d\tau+qG_{\beta} \bigl( y(0)\bigr) 
$$
and
$$
 R_{\beta,q}(t):=\int _0^t g_\beta \bigl( y(\tau)\bigr)\Big\{
 \int _0^\tau \bigl[ a_s(\tau-\sigma)+q\, a_s'(\tau-\sigma) \bigr]
 g_\beta \bigl( y(\sigma)\bigr)\, d\sigma -\frac{g_\beta
 \bigl( y(\tau)\bigr)}{\beta}\Big\}\, d\tau.
$$
Using convolutions, the latter expression can be written more concisely as
$$
 R_{\beta,q}(t)=\int _0^t g_\beta \bigl(
 y(\tau)\bigr)J_{\beta,q}(\tau)\, d\tau,
$$
where $J_{\beta,q}$ is defined for $\tau\geq 0$ by
\begin{equation}\label{J}
  J_{\beta,q}:=\bigl[ a_s+q a_s'\bigr]*g_\beta \bigl(y(\cdot)\bigr)-
  \frac{g_\beta\bigl( y(\cdot)\bigr)}{\beta} \,.
\end{equation}
\end{lem}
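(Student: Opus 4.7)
\smallskip

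\noindent\textbf{Proof plan for Lemma \ref{VandR}.} The strategy is purely algebraic: rewrite each of the three pieces $W_1$, $W_2$, $W_3$ by plugging in the Volterra integral equation \eqref{vie}, then reorganize the resulting terms to recognize $V_{\beta,q}$ and $R_{\beta,q}$. Throughout I would use the shifted kernel $a_s = a + \frac{1}{\pi}$ from Remark \ref{rems}(d)-(f), since the non-integrable constant $-\frac{1}{\pi}$ in $a$ is precisely what $W_3$ is designed to absorb. Before starting the computation I would note that, although $y$ is only assumed to lie in $\operatorname{BC}\bigl((0,\infty),\mathbb{R}\bigr)$, the forcing $f$ is in $\operatorname{BUC}^\infty$ by Remark \ref{rems}(b) and the kernel $a$ is smooth on $(0,\infty)$ by Remark \ref{rems}(e), so the integral equation \eqref{vie} bootstraps $y$ to a $C^\infty$ function on $(0,\infty)$; in particular it is legitimate to form $y'$ and to apply the chain rule to $G_\beta\circ y$.

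\smallskip

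\noindent First I would handle $W_1$. Substituting $y(\tau)$ from \eqref{vie} and splitting $a=a_s-\frac{1}{\pi}$ yields
\begin{align*}
 W_1(t) &= \int_0^t g_\beta\bigl(y(\tau)\bigr) f(\tau)\,d\tau
         + \int_0^t g_\beta\bigl(y(\tau)\bigr)\!\int_0^\tau a_s(\tau-\sigma) g_\beta\bigl(y(\sigma)\bigr)\,d\sigma\,d\tau \\
       &\quad - \frac{1}{\pi}\int_0^t g_\beta\bigl(y(\tau)\bigr)\!\int_0^\tau g_\beta\bigl(y(\sigma)\bigr)\,d\sigma\,d\tau
         - \frac{1}{\beta}\int_0^t g_\beta\bigl(y(\tau)\bigr)^2\,d\tau.
\end{align*}
For $W_3$, setting $\Gamma(t)=\int_0^t g_\beta(y(\sigma))\,d\sigma$ and differentiating $\Gamma^2$ gives $\Gamma(t)^2=2\int_0^t g_\beta(y(\tau))\Gamma(\tau)\,d\tau$, hence
$$
 W_3(t)=\frac{1}{\pi}\int_0^t g_\beta\bigl(y(\tau)\bigr)\!\int_0^\tau g_\beta\bigl(y(\sigma)\bigr)\,d\sigma\,d\tau,
$$
which precisely cancels the $-\frac{1}{\pi}$ double integral from $W_1$. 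This cancellation is the conceptual point: the $W_3$ correction is exactly what turns the non-decaying kernel $a$ into the $L^p$-kernel $a_s$.

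\smallskip

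\noindent Next I would compute $W_2(t)=qG_\beta(y(t))$. By the chain rule,
$$
 W_2(t) = qG_\beta\bigl(y(0)\bigr) + q\int_0^t g_\beta\bigl(y(\tau)\bigr) y'(\tau)\,d\tau,
$$
so I need $y'$. Differentiating \eqref{vie} is the only delicate step: by Leibniz one gets a boundary term $a(0)g_\beta(y(\tau))$, but Remark \ref{rems}(d) gives $\lim_{t\searrow 0} a(t)=0$, so this term vanishes; since $a'=a_s'$ on $(0,\infty)$, this yields
$$
 y'(\tau)=f'(\tau)+\int_0^\tau a_s'(\tau-\sigma) g_\beta\bigl(y(\sigma)\bigr)\,d\sigma.
$$
Inserting this into $W_2$ and adding $W_1+W_3$ produces
\begin{align*}
 W_{\beta,q}(t) &= qG_\beta\bigl(y(0)\bigr) + \int_0^t g_\beta\bigl(y(\tau)\bigr)\bigl[f(\tau)+qf'(\tau)\bigr]\,d\tau \\
 &\quad + \int_0^t g_\beta\bigl(y(\tau)\bigr)\!\int_0^\tau \bigl[a_s(\tau-\sigma)+q\,a_s'(\tau-\sigma)\bigr] g_\beta\bigl(y(\sigma)\bigr)\,d\sigma\,d\tau
 - \frac{1}{\beta}\int_0^t g_\beta\bigl(y(\tau)\bigr)^2\,d\tau.
\end{align*}
The first line is $V_{\beta,q}(t)$ and the second line is $R_{\beta,q}(t)$ by inspection, giving \eqref{Lyap}. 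I expect the main obstacle to be the clean justification of the differentiation of the Volterra convolution (the vanishing boundary term and the identification $a'=a_s'$), together with the symmetrization identity for $\Gamma^2$; once these are in place the lemma is a pure reorganization of terms.
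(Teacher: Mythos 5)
Your proposal is correct and follows essentially the same route as the paper: differentiate the Volterra equation using $a(0)=0$ to get $y'=f'+a_s'*g_\beta(y)$, rewrite $W_2$ via the fundamental theorem of calculus, rewrite $W_3$ by the symmetrization identity so that it absorbs the $-\frac{1}{\pi}$ part of $a$ and converts $a$ into $a_s$, then collect terms into $V_{\beta,q}$ and $R_{\beta,q}$. The only additions beyond the paper's argument are your (welcome) preliminary remark on the regularity of $y$ justifying the differentiation, and a harmless typo in the Leibniz boundary term, which should read $a(0)g_\beta\bigl(y(\tau)\bigr)$ evaluated at the endpoint, i.e. $a(0)g_\beta\bigl(y(t)\bigr)$.
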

\begin{proof}
First note that by \eqref{vie} and since $a(0)=0$ by Remark \ref{rems} we obtain
\begin{equation}\label{deriy}
y'(t)=f'(t)+ a(0)\,g_\beta \bigl( y(t) \bigr) 
+ \int _0^t a'(t-\tau)g_\beta\bigl( y(\tau)\bigr)\, d\tau 
=f'(t)+  
\int _0^t a'(t-\tau)g_\beta\bigl( y(\tau)\bigr)\, d\tau \,.
\end{equation}
Then by the simple transformation $dy=y'(\tau)d\tau$ we can rewrite $W_2$ as  
\begin{equation}\label{w2new}
G_{\beta} \bigl( y(t)\bigr) = G_{\beta} \bigl( y(0)\bigr) + \int _0^t g_\beta(y(\tau))\,y'(\tau)\, d\tau \,.
\end{equation}
By partial integration we can rewrite $W_3$ as
\begin{equation}\label{w3new}
W_3(y)(t)=\frac{1}{\pi}\int_{0}^{t} \Bigl[\, \int_{0}^{\tau} g_\beta\bigl( y(\sigma)\bigr)\, d\sigma \Bigr] g_\beta\bigl( y(\tau)\bigr)\, d\tau\,.  
\end{equation}
Now insert \eqref{deriy} in \eqref{w2new} and use the above expressions \eqref{deriy}--\eqref{w3new} in the definition of W. 
The claimed decomposition is obtained by collecting the integrals contaning $f$ and $f'$ and the constant $q \,G_{\beta}(0)$, which produces $V_{\beta,q}\,$. 
The remainining terms are found to be equal to $R_{\beta,q}$ by using the identity $a_s=a+\frac{1}{\pi}$  to write 
$$
\int _0^\tau a(\tau -\sigma) g_\beta\bigl( y(\sigma)\bigr)\, d\sigma + \frac{1}{\pi} \int _0^\tau  g_\beta\bigl( y(\sigma)\bigr)\, d\sigma = \int _0^\tau a_s\,(\tau-\sigma) g_\beta\bigl( y(\sigma)\bigr)\, d\sigma \,.
$$
and by replacing $a'$ by $a'_s$ in the other term so that finally the kernel $a_s+q\,a'_s$ results in the expression for $R_{\beta,q}\,$. 

The more concise representation of $R_{\beta,q}$ is clear, since we simply use the definition of the convolution in our specific situation  
$$
\Bigl( \bigl[ a_s+q a_s'\bigr]*g_\beta \bigl(y(\cdot)\bigr) \Bigr)\,(\tau) = 
\int _0^\tau \bigl[ a_s(\tau-\sigma)+q\, a_s'(\tau-\sigma) \bigr]
 g_\beta \bigl( y(\sigma)\bigr)\, d\sigma\,. 
$$
\end{proof}
In the next lemma we apply the Parseval-Plancherel Theorem to derive a Fourier representation for $R_{\beta,q}\,$. 
This will allow us to obtain the criterion that makes $R_{\beta,q}\,<0$ so that the nonnegative quantity $W_{\beta,q}$ will be bounded by $V_{\beta,q}$ from above.
\begin{lem}
Let $\beta,q\in(0,\infty)$ and let $y\in\operatorname{BC}\bigl(
(0,\infty),\mathbb{R}\bigr)$ be a solution of the integral equation
\eqref{vie} with $u_0\in \operatorname{H}^1$. Then
$$
 R_{\beta,q}(t)=\frac{1}{2\pi}\int _{-\infty}^\infty {\widehat g_{\beta,\theta_t}}^{\,2}(\omega)\bigl[
 \hat a_s(\omega)+q\,\widehat{a_s'}(\omega)-\frac{1}{\beta}\bigr]
 \, d \omega,\: t\geq 0,
$$
where for $\tau\in \mathbb{R}$ and $t\geq 0$
$$
 g_{\beta,\theta_t}(\tau):=g_\beta \bigl( y(\tau)\bigr)\theta_t(\tau)\,,
$$
with 
$$
 \theta _t(\tau):=\begin{cases} 1,&\tau\in [0,t],\\ 0,&\tau\in
   \mathbb{R}\setminus[0,t]\, \end{cases}
$$
and $y(\tau):=0$ for $\tau<0\,$.
\end{lem}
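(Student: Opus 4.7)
The plan is to exploit the cut-off $\theta_t$ together with the zero-extension convention of Remarks~(e)-(g) to recast the half-line convolution defining $R_{\beta,q}(t)$ as a convolution on all of $\mathbb{R}$, and then to invoke Parseval-Plancherel together with the convolution theorem.

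First I would verify that, for $\tau\in[0,t]$, the full-line convolution of $a_s+q\,a_s'$ against $g_{\beta,\theta_t}$ coincides with the truncated integral appearing in $J_{\beta,q}(\tau)$: the integrand is supported on $\sigma\in[0,\tau]$, because $g_{\beta,\theta_t}(\sigma)=0$ for $\sigma<0$ while $a_s(\tau-\sigma)=a_s'(\tau-\sigma)=0$ for $\sigma>\tau$. Since moreover $g_\beta\bigl(y(\tau)\bigr)=g_{\beta,\theta_t}(\tau)$ on $[0,t]$ and both sides vanish outside $[0,t]$, the outer integral extends to $\mathbb{R}$ without change, yielding
\begin{equation*}
R_{\beta,q}(t)=\int_{\mathbb{R}}g_{\beta,\theta_t}(\tau)\Bigl\{\bigl([a_s+q\,a_s']*g_{\beta,\theta_t}\bigr)(\tau)-\tfrac{1}{\beta}\,g_{\beta,\theta_t}(\tau)\Bigr\}\,d\tau.
\end{equation*}

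Next I would apply the Plancherel identity $\int f\,\overline{g}\,d\tau=\tfrac{1}{2\pi}\int\hat f\,\overline{\hat g}\,d\omega$ with $f=g_{\beta,\theta_t}$ and $g$ equal to the braced expression above. Since $g_{\beta,\theta_t}$ is bounded with compact support (hence in $L^1\cap L^2$) and $a_s,\,a_s'\in L^1(\mathbb{R})$ by Remarks~(e)-(f), the convolution theorem gives $\hat g=\bigl[\hat a_s+q\,\widehat{a_s'}-\tfrac{1}{\beta}\bigr]\widehat{g_{\beta,\theta_t}}$, while reality of $g_{\beta,\theta_t}$ turns $\widehat{g_{\beta,\theta_t}}\,\overline{\widehat{g_{\beta,\theta_t}}}$ into $\bigl|\widehat{g_{\beta,\theta_t}}\bigr|^{2}$. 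Because $a_s,\,a_s',\,g_{\beta,\theta_t}$ are real, $\bigl|\widehat{g_{\beta,\theta_t}}\bigr|^{2}$ is even in $\omega$ while $\operatorname{Im}(\hat a_s)$ and $\operatorname{Im}(\widehat{a_s'})$ are odd; the odd contributions integrate to zero and the complex conjugation in the bracket can be dropped, producing the claimed identity (the ${}^{\,2}$ in the statement being read as modulus squared).

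The only technical subtlety to be careful about is the convention under which $a_s'$ enters the Fourier picture. As explained in Remark~(g), $a_s'$ denotes the \emph{classical} pointwise derivative on $(0,\infty)$ extended by zero, whose Fourier transform is given by \eqref{as'Fourier}, the constant $-\tfrac{1}{\pi}$ term there arising from the jump of the extended $a_s$ at the origin. Under this convention the one-sided integral appearing in $J_{\beta,q}$ matches the full-line convolution above exactly, with no spurious $\delta_0$-contribution from the distributional derivative, and the Plancherel step applies directly. Everything else is routine Fourier-analytic bookkeeping.
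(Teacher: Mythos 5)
Your argument is correct and follows essentially the same route as the paper: extend $J_{\beta,q}$ to a full-line convolution against the compactly supported $g_{\beta,\theta_t}$, then apply Parseval--Plancherel and the convolution theorem to pull out the factor $\hat a_s+q\,\widehat{a_s'}-\tfrac{1}{\beta}$. You are in fact somewhat more explicit than the paper about the support bookkeeping, the reading of $\widehat{g_{\beta,\theta_t}}^{\,2}$ as $|\widehat{g_{\beta,\theta_t}}|^{2}$ via the even/odd cancellation, and the jump-at-zero convention behind \eqref{as'Fourier}, all of which is consistent with Remarks \ref{rems}(f)--(g).
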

The Fourier transforms of $a_s$ and $a_s'$ were discussed in Remarks
\ref{rems}. Since $g_{\beta,\theta_t}\in
\operatorname{L}^1(\mathbb{R})\cap \operatorname{L}^2(\mathbb{R})$ for each 
$t\geq0\,$, its Fourier transform  is defined classically.
\begin{proof}
We will use the Parseval-Plancherel identity
$$
 (f,g)_2=\int\limits_{\mathbb{R}} f(x)g(x)\, dx=
 \frac{1}{2\pi}\int\limits_{\mathbb{R}} \hat f(\omega)\hat g(\omega)\, d\omega=\frac{1}{2\pi}(\hat f,\hat g)_2,
$$
valid for $f,g\in \operatorname{L}^2(\mathbb{R})$. By setting 
$$
 J^t_{\beta,q}:=\bigl[ a_s+q\, a_s'\bigr]*g_{\beta,\theta_t}-
 \frac{g_{\beta,\theta_t}}{\beta}.
$$
we extend $J_{\beta,q}$ defined in \ref{J} on $[0,\infty)$ to $\mathbb{R}\,$. Then $J^t_{\beta,q}\in
\operatorname{L}^1(\mathbb{R})\cap \operatorname{L}^2(\mathbb{R})$ for each 
$t\geq0\,$ and the Parseval-Plancherel identity yields 
\begin{equation*}
 R_{\beta,q}(t)=\Bigl( g_{\beta,\theta_t},J^t_{\beta,q}\Bigr) _2=\frac{1}{2\pi}
 \Bigl( \widehat{g_{\beta,\theta_t}},\widehat{J^t_{\beta,q}}\Bigr) _2=\frac{1}{2\pi}
 \Bigl( \widehat{g_{\beta,\theta_t}}^2,\hat a_s+q\,
 \widehat{a_s'}-\frac{1}{\beta}\Bigr) _2\:\:.
\end{equation*}
\end{proof}
\begin{rem}
For $\beta,q\in (0,\infty)$ and a solution $y\in \operatorname{BC}\bigl(
(0,\infty),\mathbb{R}\bigr)$ of the integral equation \eqref{vie} with $u_0\in
\operatorname{H}^1$, the condition
$$
 R_{\beta,q}(t)< 0\text{ for }t\geq 0,
$$
is satisfied if
$$
 \hat a_s(\omega)+q\, \widehat{a_s'}(\omega)-\frac{1}{\beta}<0\text{ for }
 \omega\in \mathbb{R}.
$$
By using the series representation \eqref{aRep} one sees that this is equivalent to
\eqref{M} and amounts to
\begin{equation}\label{MNew}
  \frac{2}{\pi}\sum _{k=1}^\infty (-1)^{k+1} \Bigl[
  \frac{k^2+q\, \omega^2}{k^4+\omega^2}\Bigr]-\frac{q}{\pi}-\frac{1}{\beta}<0
\end{equation}
for $\omega\in \mathbb{R}$. Recalling the definition of $G_+$ given in
Remarks \ref{rems}(h), \ref{rems}(i) and the Popov criterion
\eqref{P}, which rewrites as
$$
 q\,\omega \operatorname{Im}\Bigl( G_+(i
 \omega)\Bigr)-\operatorname{Re}\Bigl( G_+(i
 \omega)\Bigr)-\frac{1}{\beta}<0, 
$$
and using the series representation we arrive at
$$
-\frac{q}{\pi}-\frac{2}{\pi}\sum_{k=1}^\infty (-1)^k\frac{q\,
  \omega^2+k^2}{k^4+\omega^2}- \frac{1}{\beta}<0,
$$
which is equivalent to \eqref{M}. Notice that, by symmetry, it is
enough to verify the criterion for $\omega\geq 0$, and since $G_+$ has
the explicit representation of Remarks \ref{rems}(h) and
\ref{rems}(i), it is possible to use it to verify the validity of the
condition.
\end{rem}
\begin{lem}\label{negLem}
Let $y\in\operatorname{BC}\bigl((0,\infty),\mathbb{R}\bigr)$ be a
solution of the integral equation \eqref{vie} with $u_0\in\operatorname{H}^1$.
Then, for each $\beta\in(0,\beta_0)$, there is $q=q(\beta)>0$ such
that
$$
 R_{\beta,q(\beta)}(t)\leq 0\text{ for }t\geq 0,
$$
where $\beta_0\approx 5.6655$ is the Hopf bifurcation value found in
\cite{GM97}. 
\end{lem}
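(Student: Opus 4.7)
The plan is to invoke the reduction established in the remark immediately preceding the lemma: it suffices to exhibit a positive $q=q(\beta)$ for which the Popov-type inequality
\begin{equation*}
  \operatorname{Re}\hat a_s(\omega)+q\,\operatorname{Re}\widehat{a_s'}(\omega)<\tfrac{1}{\beta},\qquad\omega\in\mathbb{R},
\end{equation*}
holds. By Remark \ref{rems}(i) this is equivalent to the classical Popov criterion $u(\omega)-q\,v(\omega)>-\tfrac{1}{\beta}$ for all $\omega\in\mathbb{R}$, where $u(\omega):=\operatorname{Re}G_{+}(i\omega)$ and $v(\omega):=\omega\operatorname{Im}G_{+}(i\omega)$. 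Both $u$ and $v$ are even in $\omega$, so only $\omega\ge 0$ has to be checked and the lemma becomes a purely frequency-domain statement about the Popov plot $\Gamma:=\{(u(\omega),v(\omega)):\omega\ge 0\}$.

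Using the explicit form $G_{+}(s)=1/(\sqrt{s}\sinh(\pi\sqrt{s}))$ from Remark \ref{rems}(h), I would record the qualitative shape of $\Gamma$. The series \eqref{asFourier}--\eqref{as'Fourier} give $(u(0),v(0))=(-\pi/6,-1/\pi)$; the closed form $|G_+(i\omega)|\sim 2\omega^{-1/2}e^{-\pi\sqrt{\omega/2}}$ forces $u(\omega),v(\omega)\to 0$ exponentially as $\omega\to\infty$; and, with $x:=\pi\sqrt{\omega/2}$, $A(x):=\sinh(x)\cos(x)$, $B(x):=\cosh(x)\sin(x)$, the factorisation
\begin{equation*}
  G_+(i\omega)=\frac{(A-B)-i(A+B)}{2\sqrt{\omega/2}\,(A^2+B^2)}
\end{equation*}
identifies the real-axis crossings of $\Gamma$ with the positive zeros $\omega_0<\omega_1<\cdots$ of $A+B$, gives $u(\omega_0)=-1/\beta_0$ by the very definition of $\beta_0$, and forces the subsequent $|u(\omega_k)|$ to decay rapidly thanks to the exponential growth of $A^2+B^2$. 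At these three landmarks the Popov inequality reduces respectively to $q>\pi^2/6-\pi/\beta$ at $\omega=0$ (vacuous when $\beta\le 6/\pi$), to $-1/\beta_0>-1/\beta$ at $\omega=\omega_0$ (strict for every $\beta<\beta_0$ and every $q>0$), and to $1/\beta>0$ at $\omega=\infty$ (trivial).

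The heart of the argument is a geometric claim: for $\beta<\beta_0$ the plot $\Gamma$ avoids the forbidden region $\{u\le -1/\beta,\,v>0\}$. Granting this, any $q(\beta)$ in the open interval
\begin{equation*}
  \Bigl(\max\bigl\{0,\,\tfrac{\pi^2}{6}-\tfrac{\pi}{\beta}\bigr\},\;\tfrac12\inf_{\omega\ge 0,\,v(\omega)>0}\frac{u(\omega)+1/\beta}{v(\omega)}\Bigr)
\end{equation*}
will work: on $\{v\le 0\}$ the Popov inequality is automatic since $qv\le 0<u+1/\beta$, and on $\{v>0\}$ it is precisely the condition on $q$. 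The interval is non-empty because the infimum on the right is strictly greater than the left endpoint: the ratio $(u+1/\beta)/v$ blows up at $\omega=\omega_0$ (denominator vanishes while the numerator tends to $1/\beta-1/\beta_0>0$) and at $\omega=\infty$ (denominator vanishes exponentially while the numerator tends to $1/\beta$), and is bounded below away from zero on any remaining compact frequency subset by continuity and the strict positivity of the numerator.

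The chief obstacle is establishing the geometric claim globally on $[0,\infty)$, despite the Nyquist spiral winding around the origin infinitely often. For large $\omega$ this is handled by the envelope bound $|u(\omega)|\le C\,e^{-\pi\sqrt{\omega/2}}$, which forces $u+1/\beta>0$ for $\omega\ge\omega^\ast$. On the compact interval $[0,\omega^\ast]$ one uses the explicit factorisation above, the identification of $\omega_0$ as the smallest positive root of $A+B=0$, and a sign analysis of $u+1/\beta_0$ and of $v$ on the subintervals separated by the $\omega_k$, to see that whenever $u(\omega)\le -1/\beta_0\ (<-1/\beta)$ one necessarily has $v(\omega)\le 0$. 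This is the technical heart of the proof and is ultimately where the Hopf-bifurcation characterisation of $\beta_0$ from \cite{GM97} enters.
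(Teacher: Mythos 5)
Your reduction to the frequency--domain inequality $u(\omega)-q\,v(\omega)>-\frac{1}{\beta}$ is exactly the paper's: its proof of Lemma \ref{negLem} sets $A=-u$, $B=v$ and reduces the claim to $\sup_{\omega}\bigl[A(\omega)+qB(\omega)\bigr]<\frac{1}{\beta}$ for some $q>0$, and then identifies the critical value $\sup_{q>0}M(q)$ with $\beta_0$ \emph{numerically} (with a partial analytic justification in Proposition \ref{equalbeta}, which the authors themselves concede relies on numerically observed qualitative features of the Popov curve). So on the part you defer as ``the technical heart'' you are no less complete than the paper. However, your argument contains a concrete error that the paper's does not. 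You claim that on $\{v\le 0\}$ the inequality is ``automatic since $qv\le 0<u+1/\beta$''. This contradicts your own landmark computation: $\bigl(u(0),v(0)\bigr)=(-\pi/6,-1/\pi)$, so $u(0)+\frac{1}{\beta}<0$ for every $\beta>\frac{6}{\pi}$, and by continuity $u(\omega)+\frac{1}{\beta}<0$ together with $v(\omega)<0$ persists on a whole interval of small $\omega>0$. On that set $u-qv>-\frac{1}{\beta}$ is equivalent to $q>\frac{u(\omega)+1/\beta}{v(\omega)}$ with a strictly \emph{positive} right-hand side, so the region $\{v<0\}$ imposes the lower bound $q>\sup\bigl\{\frac{u(\omega)+1/\beta}{v(\omega)}\,:\,v(\omega)<0,\ u(\omega)+\frac{1}{\beta}<0\bigr\}$, not merely the single constraint $q>\frac{\pi^2}{6}-\frac{\pi}{\beta}$ coming from $\omega=0$. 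The left endpoint of your admissible $q$-interval is therefore unjustified; nothing in the proposal shows that this supremum is attained at $\omega=0$.

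Second, even with corrected endpoints, the non-emptiness of the interval is asserted rather than proved. Showing that $\inf_{v>0}\frac{u+1/\beta}{v}$ is bounded away from $0$ does not show that it exceeds $\frac{\pi^2}{6}-\frac{\pi}{\beta}$, which tends to $\frac{\pi^2}{6}-\frac{\pi}{\beta_0}\approx 1.09$ as $\beta\nearrow\beta_0$; this quantitative comparison between two branches of the Popov plot is precisely the content of the identity $\sup_{q>0}M(q)=\beta_0$ that the paper only verifies numerically. (Two smaller points: for $\beta<\beta_0$ one has $-\frac{1}{\beta}<-\frac{1}{\beta_0}$, so your parenthetical ``$-1/\beta_0<-1/\beta$'' is reversed; and the forbidden region must include the axis $v=0$, where the condition reads $u(\omega)>-\frac{1}{\beta}$ at every real-axis crossing --- this is exactly where the characterization of $\omega_0$ as the most negative crossing enters.) In short, the skeleton matches the paper's, but the $\{v\le 0\}$ step is false as stated and the non-emptiness of the $q$-interval --- the actual content of the lemma --- is not established.
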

\begin{proof}
Define the functions
$$
 A(\omega)=\frac{2}{\pi}\sum_{k=1}^\infty(-1)^{k+1}
 \frac{k^2}{k^4+\omega^2}\text{ and }B(\omega)=-
 \frac{1}{\pi}+\frac{2}{\pi}\sum_{k=1}^\infty(-1)^{k+1}
 \frac{\omega^2}{k^4+\omega^2},
$$
so that condition \eqref{MNew} is satisfied if $\sup _{\omega\in
  \mathbb{R}}\bigl[ A(\omega)+qB(\omega)\bigr]<\frac{1}{\beta}$ for
some $q>0$. This is clearly only possible as long as
$$
 \beta<\frac{1}{\inf _{q>0}\sup _{\omega\in
 \mathbb{R}}\bigl[ A(\omega)+qB(\omega)\bigr]}=\sup_{q>0}
 \frac{1}{\sup _{\omega\in\mathbb{R}}\bigl[
   A(\omega)+qB(\omega)\bigr]}=:\sup_{q>0}M(q)\, .
$$
A plot of the (numerically computed) function $M$ shows that
the critical value indeed coincides with $\beta_0$ from \cite{GM97}.
\end{proof}
%
%
\begin{figure}[H]
\begin{center}
\includegraphics[totalheight=8cm]{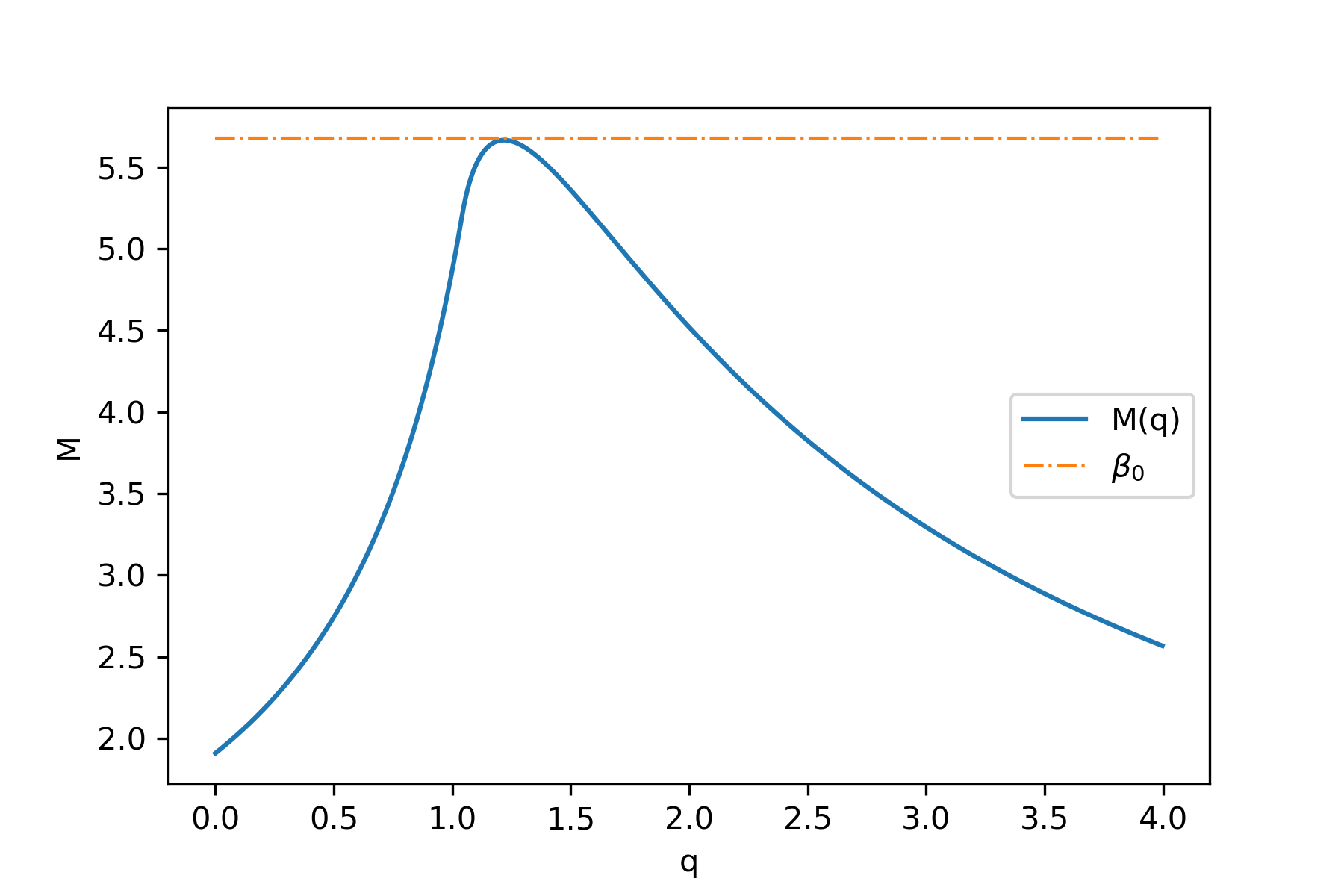}
\caption{Graph of M(q) where its maximum value $\beta_0$ is determined numerically. }
    \label{figmaxq}
\end{center}
\end{figure}
This result obviously only shows that we find a value numerically that 
coincides with $\beta_0$ in \cite{GM97} up to the 
finite precision of the numerical method used for its calculation. 
To prove that the critical value of $\beta$ indeed equals the value determined in \cite{GM97}, 
we need additional considerations. First we describe a somewhat hidden but elementary  
symmetry between $\sin(z)$ and $\sinh(z)$ on the diagonal in the complex plane. 
This may be well-known in other contexts, but since we were not able to 
find an explicit reference we include it here for clarity.   
\begin{lem}\label{symmetrydiag}
Set 
$$
D:=\{z\in\mathbb{C} | \operatorname{Re}(z)=\operatorname{Im}(z)\}\:.
$$
Then for $d\in D$ and $r\in\mathbb{R}$
$$
d\,\sin(rd)\in \mathbb{R} \Longleftrightarrow d\,\sinh(rd)\in \mathbb{R}  
$$
and if the values are real, then  
$$
d\sin(rd) = - d\sinh(rd)\:.
$$
\end{lem}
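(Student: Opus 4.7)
The plan is to exploit the simple parametrization of the diagonal $D$ and then reduce everything to the real addition formulas for $\sin$, $\cos$, $\sinh$, $\cosh$. Any $d\in D$ has the form $d=a(1+i)$ for a unique $a\in\mathbb{R}$, and for $r\in\mathbb{R}$ the product $rd=w(1+i)$ with $w:=ra\in\mathbb{R}$ again lies on $D$. The task is therefore to compute $\sin\bigl(w(1+i)\bigr)$ and $\sinh\bigl(w(1+i)\bigr)$, multiply each by $d=a(1+i)$, and read off the real and imaginary parts.

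First I would apply the addition formulas together with $\cos(iw)=\cosh(w)$, $\sin(iw)=i\sinh(w)$, $\cosh(iw)=\cos(w)$, $\sinh(iw)=i\sin(w)$ to obtain
\begin{align*}
\sin(w+iw)  &= \sin(w)\cosh(w)+i\cos(w)\sinh(w),\\
\sinh(w+iw) &= \sinh(w)\cos(w)+i\cosh(w)\sin(w).
\end{align*}
Multiplying by $d=a(1+i)$ and separating real and imaginary parts gives
\begin{align*}
d\sin(rd)  &= a\bigl[\sin(w)\cosh(w)-\cos(w)\sinh(w)\bigr] + i\,a\bigl[\sin(w)\cosh(w)+\cos(w)\sinh(w)\bigr],\\
d\sinh(rd) &= a\bigl[\sinh(w)\cos(w)-\cosh(w)\sin(w)\bigr] + i\,a\bigl[\cosh(w)\sin(w)+\sinh(w)\cos(w)\bigr].
\end{align*}

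The crucial observation is that the two imaginary parts are identical: both equal $a\bigl[\sin(w)\cosh(w)+\cos(w)\sinh(w)\bigr]$. Consequently $d\sin(rd)\in\mathbb{R}$ and $d\sinh(rd)\in\mathbb{R}$ are each equivalent to the single scalar condition
\[
\sin(w)\cosh(w)+\cos(w)\sinh(w)=0,
\]
which proves the stated equivalence. Moreover, the two real parts are visibly negatives of one another, so whenever the common reality condition holds one reads off $d\sin(rd)=-d\sinh(rd)$. The only step that requires any care is the bookkeeping of signs when expanding $(1+i)[\,\cdot+i\,\cdot\,]$, and there is no substantive obstacle beyond that.
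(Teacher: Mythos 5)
Your proof is correct; I checked the addition-formula expansions and the multiplication by $a(1+i)$, and the real parts do come out as negatives of each other while the imaginary parts coincide, which gives both claims (the degenerate case $a=0$ is trivially covered). Your route is genuinely different from the paper's: the paper never parametrizes the diagonal explicitly, but instead observes that $d/i=\bar d$ and $id=-\bar d$ for $d\in D$, and combines this with $\sin(z)=\sinh(iz)/i$ and $\sinh(\bar z)=\overline{\sinh(z)}$ to conclude in one line that $d\sin(rd)=\overline{-d\sinh(rd)}$, from which the equivalence and the sign relation follow simultaneously. The paper's argument is shorter and exposes the structural fact that the two quantities are always complex conjugates of one another (up to sign), not merely when they happen to be real; your computation recovers the same fact implicitly, since equal imaginary parts plus opposite real parts is exactly that conjugation relation. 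What your version buys in exchange for the extra bookkeeping is the explicit scalar reality condition $\sin(w)\cosh(w)+\cos(w)\sinh(w)=0$, which is precisely the transcendental equation the paper derives separately in its Nyquist analysis, so your proof makes that connection visible at no extra cost.
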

\begin{proof}
Note that for $d\in D$
$$
\frac{d}{i}={\bar d} \:\text{ and }\:  i\,d=-{\bar d}\:.
$$
We will also use that  sinh is an odd function and that for $z\in\mathbb{C}$ 
$$
\sinh{(\bar z)}=\overline{\sinh(z)} \:\text{ and }\: \sin(z)=\frac{\sinh(i\,z)}{i}\,.
$$
Now assume $d\sin(rd)\in \mathbb{R}$ for $d\in D\,$ and $r\in\mathbb{R}\,$. Then using the above identities
$$
d\sin(rd)=d\,\frac{\sinh(i\,r\,d)}{i}={\bar d}\,\sinh(-\,r\,{\bar d})=-{\bar d}\,\sinh(\,r\,{\bar d})=-\,\overline{d\,\sinh(\,r\,d)}\,
$$
which is the complex conjugate of $-d\,\sinh(rd)\,$ and thus proves the equivalence statement. 
Clearly, if the values are real then the calculation also shows that the claimed sign relationship must hold. 
\end{proof}
Now we can show that the critical $\beta$ determined from the Popov criterion 
coincides with $\beta_0$ obtained from the spectral analysis in \cite{GM97}. 
\begin{prop}\label{equalbeta}
Let 
$$
\Omega_{\text{Nyq}}:=\{\omega>0 \:|\: \operatorname{Im}\bigl[ G_{+}(i\omega)\bigr]=0\}
$$
and 
$$
\Omega_{\text{Pop}}:=
\Bigl\{\omega>0 \:|\: \operatorname{Im}\Bigl[\:\:\operatorname{Re}[ G_{+}(i\omega)] +i\,\omega\, \operatorname{Im}[ G_{+}(i\omega)]\:\:\Bigr]=0\:\Bigr\}\:.
$$
Then
$$
\Omega_{\text{Pop}}=\Omega_{\text{Nyq}}
$$
and 
$$
\Omega_{\text{Pop}}=\{\omega>0 \:|\:  \sqrt{i\omega}\,\sinh(\sqrt{i\omega}\;\pi) \in \mathbb{R}\}=
\{\omega>0 \:|\:  \sqrt{i\omega}\,\sin (\sqrt{i\omega}\;\pi) \in \mathbb{R}\}\,.
$$
Let
$$
\omega_0:= \min\big\{\omega>0 \,\big |\, \sqrt{i\omega}\,\sin (\sqrt{i\omega}\;\pi)\in \mathbb{R}^{+} \big\}.
$$
Then for the constant $\beta_0$ obtained in \cite{GM97} determined by 
$$
\beta_0=\sqrt{i\omega_0}\,sin(\sqrt{i\omega_0}\,\pi) 
$$ 
it holds that 
$$
\beta_0=\sup_{q>0}M(q)\,.
$$
\end{prop}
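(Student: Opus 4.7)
The plan proceeds in three stages, with the last being the heart of the argument.

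For the identities among $\Omega_{\text{Pop}}$, $\Omega_{\text{Nyq}}$, and the two kernel-based sets, I would simply unravel definitions. The imaginary part of $\operatorname{Re}[G_{+}(i\omega)]+i\omega\operatorname{Im}[G_{+}(i\omega)]$ is $\omega\operatorname{Im}[G_{+}(i\omega)]$, which for $\omega>0$ vanishes exactly when $\operatorname{Im}[G_{+}(i\omega)]=0$; this gives $\Omega_{\text{Pop}}=\Omega_{\text{Nyq}}$. Next I use the closed form $G_{+}(i\omega)=1/(\sqrt{i\omega}\sinh(\pi\sqrt{i\omega}))$ from Remark \ref{rems}(h), noting that $\sinh$ cannot vanish on the relevant ray (since $\sinh(z)=0$ requires $z\in i\pi\mathbb{Z}$, incompatible with $z=\pi\sqrt{i\omega}$ for $\omega>0$), so $G_{+}(i\omega)$ is a nonzero complex number whose reality is equivalent to the reality of its reciprocal $\sqrt{i\omega}\sinh(\pi\sqrt{i\omega})$. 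Since $\sqrt{i\omega}=\sqrt{\omega/2}\,(1+i)\in D$, Lemma \ref{symmetrydiag} with $r=\pi$ gives the equivalence with the sin characterization.

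For the upper bound $\sup_{q>0}M(q)\leq\beta_{0}$, I evaluate directly at $\omega_{0}$. By the minimum defining $\omega_{0}$ and Lemma \ref{symmetrydiag}, $\sqrt{i\omega_{0}}\sin(\pi\sqrt{i\omega_{0}})=\beta_{0}>0$ and $\sqrt{i\omega_{0}}\sinh(\pi\sqrt{i\omega_{0}})=-\beta_{0}$, so $G_{+}(i\omega_{0})=-1/\beta_{0}$. Remark \ref{rems}(i) then gives $A(\omega_{0})=-\operatorname{Re}[G_{+}(i\omega_{0})]=1/\beta_{0}$ and $B(\omega_{0})=\omega_{0}\operatorname{Im}[G_{+}(i\omega_{0})]=0$. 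Consequently, for every $q>0$,
$$
\sup_{\omega\in\mathbb{R}}\bigl[A(\omega)+qB(\omega)\bigr]\geq A(\omega_{0})+qB(\omega_{0})=\frac{1}{\beta_{0}},
$$
so $M(q)\leq\beta_{0}$, whence $\sup_{q>0}M(q)\leq\beta_{0}$.

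The reverse inequality $\sup_{q>0}M(q)\geq\beta_{0}$ is the substantial part. My plan is to exhibit $q^{*}>0$ for which $\sup_{\omega}[A(\omega)+q^{*}B(\omega)]=1/\beta_{0}$. Since $B(\omega_{0})=0$ makes the value $1/\beta_{0}$ at $\omega_{0}$ automatic for any $q$, the requirement is that $\omega_{0}$ be a global maximizer. The first-order condition $A'(\omega_{0})+q^{*}B'(\omega_{0})=0$ forces $q^{*}=-A'(\omega_{0})/B'(\omega_{0})$, which is precisely the tangent-slope condition for the Popov plot at its first real-axis crossing $(-1/\beta_{0},0)$. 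Writing $u+iv:=\sqrt{i\omega}\sinh(\pi\sqrt{i\omega})$ one has $A=-u/(u^{2}+v^{2})$ and $B=-\omega v/(u^{2}+v^{2})$; using $v(\omega_{0})=0$ and $u(\omega_{0})=-\beta_{0}$ one calculates $A'(\omega_{0})=u'(\omega_{0})/\beta_{0}^{2}$ and $B'(\omega_{0})=-\omega_{0}v'(\omega_{0})/\beta_{0}^{2}$, so $q^{*}=u'(\omega_{0})/(\omega_{0}v'(\omega_{0}))$.

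The main obstacle is twofold: (a) verifying $q^{*}>0$, i.e., that $u'(\omega_{0})$ and $v'(\omega_{0})$ share a sign, which I would settle by explicit differentiation of $u$ and $v$ in terms of $\sinh,\cosh,\sin,\cos$ of $\pi\sqrt{\omega/2}$ combined with the characterization $\tan(\pi\sqrt{\omega_{0}/2})=-\tanh(\pi\sqrt{\omega_{0}/2})$; and (b) the much more delicate task of showing that the critical point at $\omega_{0}$ is in fact a \emph{global} maximum of $\omega\mapsto A(\omega)+q^{*}B(\omega)$, equivalently that the tangent line of slope $1/q^{*}$ through $(-1/\beta_{0},0)$ lies weakly to the left of the entire Popov plot. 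The minimality of $\omega_{0}$ is essential here: subsequent real crossings of $G_{+}(i\omega)$ correspond to Popov-plot points lying strictly to the right of $-1/\beta_{0}$, so they automatically respect the line; the off-axis portions demand a careful monotonicity/convexity analysis of $(u,v)$ on intervals between consecutive zeros of $v$, using the closed-form expressions. This global verification is where the bulk of the rigorous work lies.
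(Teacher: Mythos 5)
Your proposal follows essentially the same route as the paper for the set identities (unravelling the definitions, using the closed form of $G_{+}$ together with $z\in\mathbb{R}\iff 1/z\in\mathbb{R}$ for $z\neq 0$, and invoking Lemma \ref{symmetrydiag} with $d=\sqrt{i\omega}$, $r=\pi$), and for the computation $G_{+}(i\omega_{0})=-1/\beta_{0}$. Where you differ is in organization and in candour about what is actually proved. Your one-sided bound $\sup_{q>0}M(q)\leq\beta_{0}$, obtained by evaluating $A(\omega_{0})=1/\beta_{0}$ and $B(\omega_{0})=0$ and noting that the supremum over $\omega$ dominates the value at $\omega_{0}$, is a clean, fully rigorous step that the paper does not isolate; this is a genuine (if small) improvement. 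For the reverse inequality you correctly identify the crux: one must produce a single $q^{*}>0$ for which $\omega_{0}$ is a \emph{global} maximizer of $A+q^{*}B$, equivalently that some Popov line through $-1/\beta_{0}$ separates the entire Popov plot. You leave this step as a plan (first-order condition, sign of $q^{*}$, and a monotonicity/convexity analysis between consecutive real crossings) rather than a proof. Be aware that the paper does exactly the same: its argument for $\beta_{0}=\sup_{q>0}M(q)$ is a geometric description of the Popov curve and explicitly states that full rigour would require a more detailed analysis of the path and asymptotics of the curve, relying instead on numerically observed qualitative features (cf.\ also the numerical identification in the proof of Lemma \ref{negLem}). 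So your gap coincides with the paper's own acknowledged gap; neither text closes it. If you want to go beyond the paper, the missing piece is precisely your item (b): a proof that the tangent line of slope $1/q^{*}$ through $(-1/\beta_{0},0)$ lies weakly to one side of $\{(A(\omega),\,B(\omega)/1):\omega>0\}$, which would require controlling the sign of $A(\omega)+q^{*}B(\omega)-1/\beta_{0}$ on all of $\mathbb{R}$, not just near $\omega_{0}$.
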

\begin{proof}
The identity $\Omega_{\text{Pop}}=\Omega_{\text{Nyq}}$ follows from the definition since $\omega=0$ is excluded from both sets. Hence both sets collect the positive values of $\omega$ at which $G_{+}(i\omega)\in\mathbb{R}\,$. 
By the explicit representation of $G_{+}$ given in Remarks \ref{rems} and since for $z\neq0\,$, clearly $z\in\mathbb{R}\Longleftrightarrow \frac{1}{z}\in \mathbb{R}$ we obtain that 
$$
\Omega_{\text{Pop}}=\big\{\omega>0 \,\big |\,  \sqrt{i\omega}\,\sinh(\sqrt{i\omega}\;\pi) \in \mathbb{R}\big\}.
$$
The second characterization of $\Omega_{\text{Pop}}$ that involves $\sin$ instead of $\sinh$ follows 
from Lemma \ref{symmetrydiag} by setting $d=\sqrt{i\omega}$ and $r=\pi\,$.   
The reason why the condition
\begin{equation}\label{charbetaGM97}
\beta_0=\sqrt{i\omega_0}\,\sin(\sqrt{i\omega_0})
\end{equation}
characterizes the constant $\beta_0$ is discussed in \cite{GM97}. 
In fact, for $\beta=\beta_{0}$ the generator of the linear semigroup possesses a 
complex conjugate pair of imaginary eigenvalues $\pm i\,\omega_0$ that gives rise to a 
Hopf bifurcation as the parameter crosses the value $\beta_{0}\,$. 
Finally, that $\beta_{0}\,=\,\sup_{q>0}M(q)$ follows from the fact the Popov curve intersects the real 
axis for $\omega=\omega_0$ and that this intersection is the most negative intersection 
point of the Popov curve with the real axis. 
This point is given by    
$$
G_{+}(i\omega_0)=\frac{1}{\sqrt{i\omega_0}\,\sinh(\sqrt{i\omega_0}\;\pi)}.
$$
Since by \eqref{charbetaGM97} and Lemma \ref{symmetrydiag}
$$
\beta_0=\sqrt{i\omega_0}\,\sin(\sqrt{i\omega_0}\;\pi)=-\sqrt{i\omega_0}\,\sinh(\sqrt{i\omega_0}\;\pi)
$$
we obtain
$$
G_{+}(i\omega_0)= - \frac{1}{\beta_{0}}\,.
$$
Therefore for any $\beta > \beta_{0}$ no half-plane in $\mathbb{C}$ exists that 
contains the real point $-\frac{1}{\beta}$ without intersecting the Popov curve. 
This geometric criterion is implicit in the Popov stability criterion, 
where the parameter $q>0$ varies the slope of the half-plane's boundary. 
Clearly, to be fully rigorous in this argument we would need to discuss the path and the 
asymptotics of the Popov curve in the complex plane in more detail. 
Here we are thus relying to some extent on the qualitative features of the Popov curve that was obtained numerically.   
\end{proof}
The above result establishes a relationship between those values of $\beta\in \mathbb{R}\backslash \{0\}$ that produce a conjugate complex pair of purely imaginary eigenvalues of the operator 
$$
A(\beta):=-\triangle+\beta\gamma_0'\gamma_\pi \:,
$$
studied in \cite{GM99}, and the real values of the Nyquist curve. In fact, the above result also implies that 
$$
\{\beta\in\mathbb{R}\backslash \{0\}  \:|\: \sigma(A(\beta))\cap i\mathbb{R} \neq \emptyset \} 
= \{ -\frac{1}{G_{+}(i\omega)} \:|\: \omega\in \mathbb{R}\backslash \{0\} \text{ and } |\omega|\in \Omega_{\text{Nyq}} \}\;.
$$
After these clarifications we can now formulate a lemma that exploits the possibility to 
control the sign of $R_{\beta,q(\beta)}$ for $\beta\in(0,\beta_0)\,$ to obtain a bound on $V_{\beta,q(\beta)}\,$.
\begin{lem}\label{Elem}
Let $y\in\operatorname{BC}\bigl((0,\infty),\mathbb{R}\bigr)$ be a
solution of the integral equation \eqref{vie} with
$u_0\in\operatorname{H}^1\,$. Let $\beta\in(0,\beta_0)$ and let
$q=q(\beta)>0$ be such that $R_{\beta,q(\beta)}(t)\leq 0$ for $t\geq
0$. Then the function $V=V_{\beta,q(\beta)}$ defined in Lemma
\ref{VandR} satisfies
$$
 V(t)\leq c<\infty\text{ for }t\geq 0,
$$
with a constant $c$ independent of $t\geq 0$.
\end{lem}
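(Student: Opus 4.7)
The plan is to write $V=V_{\beta,q(\beta)}$ as a ``dangerous'' linear term in $I(t):=\int_0^t g_\beta\bigl(y(\tau)\bigr)\,d\tau$ plus a harmless bounded remainder, and then to play it off against the quadratic control of $I$ coming from $W_3$ via the identity $V=W-R$.

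First I would isolate the mean value of the forcing. Writing $f(\tau)=\bar u_0+\bigl(f(\tau)-\bar u_0\bigr)$ in the definition of $V$ gives
\begin{equation*}
 V(t)=\bar u_0\, I(t)+\int_0^t g_\beta\bigl(y(\tau)\bigr)\bigl[f(\tau)-\bar u_0+q\, f'(\tau)\bigr]\,d\tau+q\, G_\beta\bigl(y(0)\bigr).
\end{equation*}
The remainder on the right is bounded uniformly in $t$: the nonlinearity satisfies $\|g_\beta\|_\infty\leq 1$, the shifted forcing $f-\bar u_0$ and its derivative $f'$ lie in $\operatorname{L}^1(0,\infty)$ by Remark \ref{rems}(b) together with the exponential-mode representation \eqref{fRep} (so, in particular, $|f(t)-\bar u_0|\leq \int_t^\infty|f'(s)|\,ds$ with $sf'(s)\in\operatorname{L}^1$), and $G_\beta$ is bounded on $\mathbb{R}$. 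Denoting this uniform bound by $C_0=C_0(u_0,\beta,q)$, I obtain
\begin{equation*}
 V(t)\leq |\bar u_0|\cdot|I(t)|+C_0,\qquad t\geq 0.
\end{equation*}

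Next I would use the decomposition of Lemma \ref{VandR}, namely $V(t)=W_{\beta,q}(t)-R_{\beta,q}(t)$. By Lemma \ref{nonneg} the components $W_1,W_2,W_3$ are nonnegative, and the choice of $q=q(\beta)$ provided by Lemma \ref{negLem} ensures $R_{\beta,q(\beta)}(t)\leq 0$. Therefore
\begin{equation*}
 V(t)\geq W_3(t)=\frac{I(t)^2}{2\pi},\qquad t\geq 0,
\end{equation*}
which in particular gives the quadratic control $|I(t)|\leq\sqrt{2\pi\, V(t)}$ (note that $V(t)\geq 0$ since the right‐hand side is nonnegative).

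Combining the two inequalities I get
\begin{equation*}
 V(t)\leq |\bar u_0|\sqrt{2\pi\, V(t)}+C_0,
\end{equation*}
a scalar quadratic inequality in $x:=\sqrt{V(t)}$ of the form $x^2-|\bar u_0|\sqrt{2\pi}\,x-C_0\leq 0$, whose solutions are uniformly bounded. The constant $c$ in the statement is thus the square of the positive root of the resulting quadratic and depends only on $u_0$, $\beta$, and $q(\beta)$. The main step where care is needed is the integrability verification for $f-\bar u_0$ and $f'$: it is not $f$ itself but its deviation from the mean that is in $\operatorname{L}^1$, and without that observation the term $\int_0^t g_\beta(y)f\,d\tau$ would only admit a linear-in-$t$ bound, defeating the AM--GM argument with $W_3$. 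Everything else is a direct assembly of the previously established lemmas.
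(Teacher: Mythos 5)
Your proof is correct, and it reaches the conclusion by a somewhat different (and arguably cleaner) route than the paper. Both arguments pivot on the same key inequality $V(t)\ge W_3(t)=\frac{1}{2\pi}I(t)^2$, obtained from $V=W-R$ with $R\le 0$ and $W_1,W_2\ge 0$; the difference lies in how the complementary \emph{upper} bound on $V$ in terms of $I$ is produced. The paper integrates $\int_0^t g_\beta(y)[f+qf']\,d\tau$ by parts, which shifts derivatives onto the forcing and requires $f',f''\in\operatorname{L}^1$ from Remark \ref{rems}(b); the price is that the resulting bound involves the running supremum $K(t)=\sup_{\tau\le t}|E(\tau)|$ of $E=I$, so the paper must first deduce boundedness of $E$ from the inequality $\frac{1}{2\pi}E^2(t)-c\,\sup_{\tau\le t}|E(\tau)|-W_2(0)\le 0$ before concluding for $V$. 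You instead subtract the mean $\bar u_0$ from $f$, use that $f-\bar u_0$ and $f'$ lie in $\operatorname{L}^1(0,\infty)$ (which does not appear verbatim in Remark \ref{rems}(b) for $f-\bar u_0$, but which you correctly justify from \eqref{fRep}; this is indeed the one step needing care), and obtain the pointwise bound $V(t)\le|\bar u_0|\,|I(t)|+C_0$. This yields a scalar quadratic inequality in $\sqrt{V(t)}$ at each fixed $t$, with no running supremum and no need for $f''$. Your approach buys a shorter endgame and weaker regularity demands on $f$; the paper's buys the explicit boundedness of $E$, which it reuses later in the uniform-continuity proposition. One small inaccuracy: $G_\beta(z)=\frac{1}{\beta}\log\cosh(\beta z)$ is \emph{not} bounded on $\mathbb{R}$ (it grows linearly), but this is harmless since the term $q\,G_\beta\bigl(y(0)\bigr)$ is a fixed constant independent of $t$, which is all you actually use.
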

\begin{proof}
Since
$$
 V(t)=\int _0^t g_\beta \bigl( y(\tau)\bigr)\bigl[ f(\tau)+q\,
 f'(\tau)\bigr]\, d\tau+W_2(0), 
$$
integration by parts yields
\begin{multline}\label{VRep}
 V(t)=\Bigl( \int_0^t g_\beta \bigl( y(\tau)\bigr)\, d\tau\Bigr) 
 \bigl[ f(t)+q\, f'(t)\bigr]+\\-\int _0^t\bigl[ \int_0^\tau g_\beta
 \bigl( y(\sigma)\bigr)\, d\sigma\bigr] \bigl[ f'(\tau)+q\,
 f''(\tau)\bigr]\, d\tau+W_2(0).\phantom{\hspace{1cm}}
\end{multline}
Since $R_{\beta,q(\beta}(t)\leq 0$ and $W_{\beta,q(\beta)}(t)\geq 0$
for $t\geq 0$, then using \eqref{Lyap} we conclude that, for $t\geq 0$,
$$
 V(t)\geq V(t)+R_{\beta,q(\beta)}(t)=W_{\beta,q(\beta)}\geq
 W_3(t)=\frac{1}{2\pi} \Bigl( \int_0^t g_\beta \bigl( y(\tau)
 \bigr)\, d\tau\Bigr) ^2\geq 0.
$$
It follows that
\begin{equation}\label{VEst2}
 \frac{1}{2\pi}\Bigl(\int_0^t g_\beta \bigl( y(\tau)\bigr)
 \, d\tau\Bigr)^2-V(t)\leq 0,\: t\geq 0\,.
\end{equation}
With
$$
 K(t):=\sup_{\tau\in[0,t]}\big | \int_0^\tau g_\beta \bigl(
 y(\sigma)\bigr)\, d\sigma\big |,
$$
a constant $c>0$ can be found which is independent of $t\geq 0$ and
such that
$$
 \Big | \bigl[ f(t)+q\, f'(t)\bigr] \int_0^t g_\beta \bigl(
 y(\tau)\bigr)\, d\tau \Big |\leq c\, K(t)
$$
and
$$
 \Big | \int _0^t\bigl[ \int_0^\tau g_\beta
 \bigl( y(\sigma)\bigr)\, d\sigma\bigr] \bigl[ f'(\tau)+q\,
 f''(\tau)\bigr]\, d\tau\Big |\leq c\, K(t),
$$
for $t\geq 0$. This is a consequence of the boundedness of $y$, $f$,
$f'$, and $f''$ discussed in Remarks \ref{rems}. Therefore
we infer from \eqref{VRep} that there is a $c>0$ such that
\begin{equation}\label{VEst}
  0\leq V(t)\leq c\, K(t)+W_2(0)\text{ for }t\geq 0.
\end{equation}
Defining
$$
 E(t):=\int_0^t g_\beta \bigl( y(\tau)\bigr)\, d\tau,\: t\geq 0,
$$
we infer from \eqref{VEst2} that
$$
 \frac{1}{2\pi}E^2(t)-c\, \sup_{\tau\in[0,t]}\big | E(\tau)\big
 |-W_2(0)\leq 0,\: t\geq 0.
$$
This finally implies that $E$ is bounded and therefore that $K$ is
bounded, which concludes the proof by \eqref{VEst}.
\end{proof}
\begin{prop}
Let $\beta\in(0,\beta_0)$ and $y\in\operatorname{BC}
\bigl(0,\infty),\mathbb{R}\bigr)$ be a solution of \eqref{vie} with 
$u_0\in \operatorname{H}^1$. Then
$y\in\operatorname{BUC}\bigl([0,\infty),\mathbb{R})\bigr)$.
\end{prop}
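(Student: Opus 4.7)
The plan is to read Lipschitz/uniform-continuity estimates directly off the Volterra equation \eqref{vie}: $f$ is already globally uniformly continuous and the convolution term is in fact Lipschitz with a uniform constant, so that combined with the boundedness hypothesis $y \in \operatorname{BUC}([0,\infty),\mathbb{R})$ will follow.

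First I would extend $y$ continuously to $t = 0$. The convolution term vanishes as $t \to 0^+$ since $\|g_\beta\|_\infty \leq 1$ and $a \in \operatorname{BC}(\mathbb{R})$ by Remark \ref{rems}(e); and $f(0^+) = u_0(\pi)$ follows from \eqref{fRep} together with the absolute convergence $\sum |\hat u_{0k}| \leq \bigl(\sum k^{-2}\bigr)^{1/2}\bigl(\sum k^2|\hat u_{0k}|^2\bigr)^{1/2} < \infty$ valid for $u_0 \in \operatorname{H}^1$. Setting $y(0) := u_0(\pi)$ yields a continuous extension. The representation \eqref{fRep} and the exponential decay of its Fourier coefficients in $t$ also show $f \in \operatorname{BUC}([0,\infty),\mathbb{R})$: $f$ is continuous on $[0,\infty)$ and tends to $\bar u_0$ as $t \to \infty$, hence extends continuously to the one-point compactification $[0,\infty]$.

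The crux is the uniform Lipschitz estimate for the convolution. For $0 \leq s < t$ and $h := t-s$, I would decompose
\begin{equation*}
 \int_0^t a(t-\tau) g_\beta(y(\tau))\, d\tau - \int_0^s a(s-\tau) g_\beta(y(\tau))\, d\tau = I_1 + I_2,
\end{equation*}
where $I_2 := \int_s^t a(t-\tau) g_\beta(y(\tau))\, d\tau$ satisfies $|I_2| \leq \|a\|_\infty \, h$ because $\|g_\beta\|_\infty \leq 1$, and $I_1 := \int_0^s [a(t-\tau) - a(s-\tau)] g_\beta(y(\tau))\, d\tau$. Substituting $u = s - \tau$ in $I_1$ and writing $a(u+h) - a(u) = \int_u^{u+h} a'(\sigma)\, d\sigma$ (valid since $a \in \operatorname{BC}^\infty(\mathbb{R})$ by Remark \ref{rems}(e)), Fubini gives
\begin{equation*}
 |I_1| \leq \int_0^s \int_u^{u+h} |a'(\sigma)|\, d\sigma\, du \leq h\, \|a'\|_{\operatorname{L}^1(\mathbb{R})},
\end{equation*}
and $\|a'\|_{\operatorname{L}^1(\mathbb{R})} < \infty$ again by Remark \ref{rems}(e). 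Combining the estimates produces $|y(t)-y(s)| \leq |f(t)-f(s)| + \bigl(\|a\|_\infty + \|a'\|_{\operatorname{L}^1(\mathbb{R})}\bigr)\,|t-s|$, which, together with the uniform continuity of $f$, is a modulus of uniform continuity for $y$; the boundedness hypothesis then gives $y \in \operatorname{BUC}([0,\infty),\mathbb{R})$.

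The main obstacle I anticipate is obtaining a translation bound for $a$ uniform in $s$: naive $\operatorname{L}^1$-translation continuity of $a$ (or $a_s$) only produces a non-uniform modulus. The structural input that rescues the argument is the integrability $a' \in \operatorname{L}^1(\mathbb{R})$ from Remark \ref{rems}(e), which upgrades the $\operatorname{L}^1$-translation estimate on $a$ to the genuine Lipschitz bound via the Fubini step above.
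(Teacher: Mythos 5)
Your proof is correct, and it takes a genuinely different route from the paper's. The paper splits the kernel as $a=-\tfrac{1}{\pi}+a_s$ and treats three terms separately: $f$ is uniformly continuous by Remark \ref{rems}(b); the term $-\tfrac{1}{\pi}\int_0^t g_\beta(y(\tau))\,d\tau$ is uniformly Lipschitz because $\|g_\beta\|_\infty\le 1$ (the paper also invokes its boundedness, imported from Lemma \ref{Elem} and hence from the restriction $\beta\in(0,\beta_0)$); and the remaining term $a_s*(g_\beta\circ y)$ is handled by the standard result that the convolution of an $\operatorname{L}^1$ kernel with an $\operatorname{L}^\infty$ function is uniformly continuous. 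You instead keep $a$ intact and prove, via the Fubini estimate $|I_1|\le h\,\|a'\|_{\operatorname{L}^1}$ together with $|I_2|\le\|a\|_\infty h$, that the entire convolution term is globally Lipschitz with the explicit constant $\|a\|_\infty+\|a'\|_{\operatorname{L}^1(\mathbb{R})}$. This buys two things: a quantitative modulus of continuity for $y-f$, and independence from Lemma \ref{Elem} and from the hypothesis $\beta<\beta_0$ --- your argument yields the conclusion for every $\beta>0$, using only $y\in\operatorname{BC}$, Remark \ref{rems}(b) for $f$, and Remark \ref{rems}(e) for $a$ and $a'$. One small inaccuracy in your closing paragraph: $\operatorname{L}^1$-translation continuity of $a_s$ \emph{does} give a modulus uniform in $s$ (it depends only on $h$, which is exactly why the paper's $\operatorname{L}^1*\operatorname{L}^\infty$ argument works); what it cannot give is a Lipschitz rate, and it cannot be applied to $a$ itself since $a\notin\operatorname{L}^1$. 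This does not affect the validity of your proof.
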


\begin{proof}
Since $y$ solves \eqref{vie}, we have that
$$
 y(t)=f(t)-\frac{1}{\pi}\int _0^t g_\beta \bigl( y(\tau)\bigr)\,
 d\tau+\int _0^t a_s(t-\tau) g_\beta \bigl( y(\tau)\bigr)\, d\tau,
$$
for $t\geq 0$ and where $f(t)=\bigl( e^{-tA_N}u_0 \bigr)(\pi)$ is the
forcing function induced by $u_0\,$.
We proceed by verifying that each term is uniformly
continuous. The first term is uniformly continuous by Remark \ref{rems}(b). For 
the second term note that in the proof of Lemma \ref{Elem} it was shown 
that $E$ defined by $E(t)=\int _0^t g_\beta \bigl( y(\tau)\bigr)\, d\tau$ for
$t\geq 0$ is bounded. It is clearly continuous  by assumption (on
$y$). Its derivative is given by $g_\beta\circ y$ which is a bounded
function and hence implies that $E$ is uniformly Lipschitz. 
Finally, the last term can be written as a convolution
$$
 \int _0^t a_s(t-\tau) g_\beta \bigl( y(\tau)\bigr)\, d\tau=\bigl(
 a_s*(g_\beta\circ y)\bigr)(t),\: t\geq 0. 
$$
Since $a_s\in \operatorname{L}^1 \bigl( [0,\infty)\bigr)$ and
$g_\beta\circ y\in \operatorname{L}^\infty \bigl( [0,\infty)\bigr)$,
well-known results about the regularity of convolutions imply the
stated uniform continuity (see e.g. \cite{Ama95} or \cite{Fo84}). 
\end{proof}
Before we can proceed with the proof of Proposition \ref{vieSol20} we
need the following elementary but important result.
\begin{lem}\label{lastLem}
Let $y\in \operatorname{BUC} \bigl( [0,\infty)\bigr)$ and 
$\beta\in (0,\infty)\,$. Then the function
$$
 g_\beta(y)\bigl[ y-\frac{g_\beta(y)}{\beta}\bigr]\in
 \operatorname{BUC}\bigl( [0,\infty)\bigr),
$$
and is positive unless $y=0\,$.
\end{lem}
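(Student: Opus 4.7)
My plan is to split the statement into the regularity claim and the sign claim, which are essentially independent.

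For the regularity, I would first observe that $g_\beta(w)=\tanh(\beta w)$ is globally Lipschitz with constant $\beta$, since $|g_\beta'(w)|=\beta\operatorname{sech}^2(\beta w)\le \beta$. Hence if $y\in\operatorname{BUC}([0,\infty))$, the composition $g_\beta\circ y$ is again in $\operatorname{BUC}([0,\infty))$, with the additional bound $\|g_\beta\circ y\|_\infty\le 1$. Since $y$ itself is bounded (being in $\operatorname{BUC}$), the function $h(t):=y(t)-\beta^{-1}g_\beta(y(t))$ is a bounded, uniformly continuous function, and so is $g_\beta(y(t))$. The product of two bounded uniformly continuous scalar functions is bounded and uniformly continuous (boundedness is what controls the cross terms in $|f_1g_1-f_2g_2|\le \|f\|_\infty|g_1-g_2|+\|g\|_\infty|f_1-f_2|$), which yields $g_\beta(y)[y-g_\beta(y)/\beta]\in\operatorname{BUC}([0,\infty))$.

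For the sign claim, the key elementary inequality is that $|\tanh z|<|z|$ for every $z\neq 0$, with equality only at $z=0$, together with $\tanh z$ having the same sign as $z$. Thus for $y\ne 0$, $g_\beta(y)$ and $y$ share the same sign and $|g_\beta(y)/\beta|=|\tanh(\beta y)|/\beta<|y|$, so $y-g_\beta(y)/\beta$ also has the same sign as $y$. The product of two quantities with the same nonzero sign is strictly positive, and clearly the expression vanishes when $y=0$.

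The proof involves no real obstacle; the only subtlety is remembering that $\operatorname{BUC}$ carries boundedness with it, so one may safely use the standard fact that the multiplication map $\operatorname{BUC}\times\operatorname{BUC}\to\operatorname{BUC}$ is well defined. No global smoothness beyond Lipschitz continuity of $g_\beta$ is needed, so the argument is genuinely elementary and should occupy only a few lines.
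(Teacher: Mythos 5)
Your proposal is correct and follows essentially the same route as the paper: both rest on the global Lipschitz continuity of $\tanh$ (so that $g_\beta\circ y$ inherits the $\operatorname{BUC}$ property), the stability of $\operatorname{BUC}$ under products of bounded functions, and the elementary facts that $\tanh z$ shares the sign of $z$ with $|\tanh z|<|z|$ for $z\neq 0$. The paper merely packages the sign argument via the factorization $\frac{1}{\beta}\,z\tanh(z)\bigl[1-\frac{\tanh z}{z}\bigr]$ with $z=\beta y$, which is the same content.
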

\begin{proof}
First observe that for $y\neq0$
$$
 g_\beta(y)\bigl[ y-\frac{g_\beta(y)}{\beta}\bigr]=
 \frac{1}{\beta}\tanh(\beta y)\beta y\bigl[ 1-
 \frac{\tanh(\beta y)}{\beta y}\bigr],
$$
which shows that the function is positive unless $y=0$ since
the functions $[z\to z\tanh(z)]$ and $[z\to 1-\frac{\tanh(z)}{z}]$
, for $z=\beta y$, enjoy the same property and $\beta>0$. Next
$$
 g_\beta(y)\bigl[ y-\frac{g_\beta(y)}{\beta}\bigr]=
 \frac{1}{\beta}\tanh(z)\bigl[ z-\tanh(z)\bigr]
$$
shows that the function is the product of bounded and unifornly
continuous functions since $z$ is such and since $\tanh$ has a bounded
derivative and is therefore globally Lipschitz continuous. Indeed this
yields
$$
 \big |\tanh \bigl( z(t)\bigr)- \tanh \bigl( z(s)\bigr)\big |\leq L
 \big | z(t)-z(s)\big |,\: t,s\in [0,\infty),
$$
so that the boundedness and uniform continuity of $z$ is inherited by
$\tanh(z)$. 
\end{proof}
Having now collected all the required technical results in the previous lemmas we can finally 
give the proof of our main proposition on the integral equation \eqref{vie}.

\begin{proof}[{\bf Proof of Proposition \ref{vieSol20}}] $ $\\
Since $\beta\in(0,\beta_0)$ and $y$ solves \eqref{vie}, we can apply
Lemmas \ref{negLem} and \ref{Elem} to find $q(\beta)>0$ such that
$$
 R_{\beta,q(\beta)}(t)\leq 0\text{ for }t\geq 0,
$$
and $c>0$ such that
$$
 c\geq V_{\beta,q(\beta)}(t)\geq W_{\beta,q(\beta)}(t)\geq
 W_{1,\beta}(t)\geq 0,
$$
for $t\geq 0$, i.e. such that
$$
 W_{1,\beta}(t)=\int _0^t g_\beta \bigl( y(\tau)\bigr)
 \bigl[ y(\tau)-\frac{g_\beta \bigl( y(\tau)\bigr)}{\beta}\bigr]
 \, d\tau=:\int _0^t H\bigl( y(\tau)\bigr)\, d\tau\leq c<\infty,
$$
for $t\geq 0$. By Lemma \ref{lastLem} the function $H(y)$ is
non-negative, only vanishes if $y=0$, and is uniformly
continuous. Assume by contradiction that $y(t)\not\to 0$ 
as $t\to\infty$. Then, since $H(\xi)>0$ for $0\neq \xi\in \mathbb{R}$,
there is a sequence $(t_m)_{m\in \mathbb{N}}$ in $\mathbb{R}$ with
$t_m\to\infty$ and a constant $c>0$ such that
$$
 H \bigl( y(t_m)\bigr)\geq 2c\text{ for all }m\in \mathbb{N}. 
$$
Since $H\circ y\in \operatorname{BUC}\bigl([0,\infty)\bigr)$, a
$\delta>0$ can be found such that
$$
 H \bigl( y(t)\bigr)\geq c\text{ for }t\in [t_m-\delta,t_m+\delta ],
$$
and all $m\in \mathbb{N}$. It follows that
$$
 W_1(t_m)=\int _0^{t_m}H \bigl( y(\tau)\bigr)\, d\tau\geq
 \sum_{k=1}^{m-1}\int _{t_k-\delta}^{t_k+\delta}H \bigl(
 y(\tau)\bigr)\, d\tau\geq (m-1)2 \delta c \,, 
$$
which contradicts the boundedness of $W_1$ on $[0,\infty)$ since $m$ can be chosen
arbitrarily large. 
\end{proof}
\section{Global Stability and Convergence Results}
We finally state our main stability and convergence result by explicitly 
determining the global attractor of \eqref{tsEq} for $\beta\in (0,\beta_0)\,$. 
Adopting the terminology in \cite{La91}, we need to make the important 
distinction between the concept of the ``minimal closed global B-attractor" ${\mathcal A}_{\beta}$ 
and the ``minimal closed global attractor" ${\hat{{\mathcal A}}}_\beta$ of our semiflow 
$(\Phi_\beta,\operatorname{H}^1)$ generated by the problem \eqref{tsEq} for each $\beta\in(0,\infty)\,$. 
In general, it holds that the attractor $\hat{{\mathcal A}}_\beta$ defined by 
$$
\hat{{\mathcal A}}_\beta:=\overline{\bigcup\limits_{u \in {H^1}} \omega(u)} 
$$
is a proper subset of the B-attractor ${\mathcal A}_{\beta}$ that is 
defined, in a more implicit way, by 
$$
{\mathcal A}_\beta:=\bigcup\limits_{B \in {\mathcal B}} \omega(B)\:, 
$$
where $\mathcal B$ is the set of all bounded subsets of $H^1\,$.\\
By Theorem 2.3. in \cite{La91} the attractor $\hat{{\mathcal A}}_\beta$ 
coincides withe the set of all stationary points of the 
semiflow if a suitable Ljapunov function exists. 
Using this distinction between different attractor concepts we can now formulate our main result.   
\begin{thm}\label{mainresult}
For $\beta\in(0,\beta_0)$ the minimal closed global attractor $\hat{{\mathcal A}}_\beta\subset H^1$ of the continous semiflow $(\Phi_\beta,\operatorname{H}^1)$ 
generated by the nonlinear and nonlocal problem \eqref{tsEq} is given by 
$$
\hat{{\mathcal A}}_\beta=\{ 0 \}\,,
$$
where $\beta_0\approx 5.6655$ is the constant determined in \cite{GM97}.\\
For $\beta \in (0,\frac{4}{\pi})$ the attractor $\hat{{\mathcal A}}_\beta$ and the B-attractor ${\mathcal A}_\beta$ coincide, i.e. 
$$
\mathcal{A}_\beta=\hat{{\mathcal A}}_\beta=\{ 0 \}\,.
$$
\end{thm}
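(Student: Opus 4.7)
The plan is to decompose the theorem into its two assertions and tackle each one by stitching together the preceding propositions rather than introducing new technical machinery. For the first assertion, I would fix $\beta\in(0,\beta_0)$ and an arbitrary $u_0\in\operatorname{H}^1$, then pass to the trace $y(t):=\gamma_\pi\bigl(\Phi_\beta(t,u_0)\bigr)$. By Lemma \ref{VoltInt1} this $y$ solves the Volterra integral equation \eqref{vie}, with the forcing $f$ induced by $u_0$. To apply Proposition \ref{vieSol20}, I need $y\in\operatorname{BC}((0,\infty),\mathbb{R})$: continuity is inherited from the continuity of the semiflow on $\operatorname{H}^1$, and boundedness comes from the existence of the global B-attractor in \cite{CBOP03}, which together with the embedding $\operatorname{H}^1\hookrightarrow\operatorname{C}([0,\pi])$ yields $|y(t)|\leq c(u_0)$ for $t>0$. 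Proposition \ref{vieSol20} then delivers $y(t)\to 0$, and Proposition \ref{ConvEquiv} upgrades this to $\Phi_\beta(t,u_0)\to 0$ in $\operatorname{H}^1$. Hence $\omega(u_0)=\{0\}$ for every $u_0\in\operatorname{H}^1$, so that
\[
 \hat{\mathcal{A}}_\beta=\overline{\bigcup_{u\in\operatorname{H}^1}\omega(u)}=\{0\},
\]
which is the first claim on the full range $(0,\beta_0)$.

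For the second assertion, since $\frac{4}{\pi}<\beta_0$ the first step already yields $\hat{\mathcal{A}}_\beta=\{0\}$ on $(0,\frac{4}{\pi})$, so only $\mathcal{A}_\beta=\{0\}$ remains. The plan here is to invoke the Lyapunov construction from Remark \ref{Ljap2}, where $V(u)=\frac{1}{2}(u,u)_2$ is shown to have strictly negative orbital derivative on all non-stationary orbits for $\beta\in(0,\frac{4}{\pi})$. The stationary points of \eqref{tsEq} satisfy $u''=0$ with $u_x(\pi)=0$ and $u_x(0)=\tanh(\beta u(\pi))$, forcing $u$ constant and then $\tanh(\beta u(\pi))=0$, so the trivial solution is the unique equilibrium. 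Combining the existence of the B-attractor (from \cite{CBOP03}) with the strict Lyapunov function $V$ and appealing to Theorem 2.3 of \cite{La91}, which identifies the B-attractor with the set of equilibria in the presence of such a functional, gives $\mathcal{A}_\beta=\{0\}$. The equality $\mathcal{A}_\beta=\hat{\mathcal{A}}_\beta=\{0\}$ on $(0,\frac{4}{\pi})$ then follows.

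The principal technical obstacle has already been surmounted in the proof of Proposition \ref{vieSol20}; at the level of the theorem itself, the remaining delicate point is the a priori boundedness of the trace $y$ on $(0,\infty)$, which is not internal to the Volterra formulation but depends on the global-in-time existence and boundedness of $\operatorname{H}^1$-orbits established in \cite{CBOP03}. A secondary care is needed in matching the two attractor notions $\hat{\mathcal{A}}_\beta$ and $\mathcal{A}_\beta$ from \cite{La91}, particularly to ensure that the Lyapunov argument applies to the B-attractor (via $\omega$-limits of bounded sets) and not merely to individual orbits. Once these two housekeeping points are stated explicitly, the theorem follows directly from the results already assembled in the previous sections.
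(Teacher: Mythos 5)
Your proposal follows essentially the same route as the paper: the first assertion is obtained by chaining Lemma \ref{VoltInt1}, Proposition \ref{vieSol20}, and Proposition \ref{ConvEquiv} to conclude $\omega(u_0)=\{0\}$ for every $u_0$, and the second by combining the Ljapunov functional of Remark \ref{Ljap2} with Theorem 2.3 of \cite{La91} to identify $\mathcal{A}_\beta$ with the (trivial) set of equilibria. Your explicit verification that the trace lies in $\operatorname{BC}((0,\infty),\mathbb{R})$ via the bounded orbits of \cite{CBOP03} is a detail the paper leaves implicit, but it does not change the argument.
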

\begin{proof}
Let $u\in H^1\,$ arbitrary. Then by Lemma \ref{VoltInt1} the trace of the orbit 
$\gamma_\pi(\Phi_\beta(t,u))$ solves the Volterra integral equation \eqref{vie}. 
By Proposition \ref{vieSol20} the assumption $\beta\in(0,\beta_0)$ implies that as $t \to\infty$
$$
\gamma_\pi(\Phi_\beta(t,u))\to 0\,\text{ in }\mathbb{R}\:. 
$$
Finally, by Proposition \ref{ConvEquiv} we conclude that 
$$
\Phi_\beta(t,u)\to 0\,\text{ in } H^1\: \text{ as } t\to\infty \:,
$$
which implies that the omega-limit set $\omega(u)=\{0\}\,$. Therefore $\hat{{\mathcal A}}_\beta=\{ 0 \}\,$. \\ 
For $\beta\in(0,\frac{4}{\pi})$ we have shown in Remark \ref{Ljap2} that a Ljapunov function for the semiflow exists. 
By Theorem 2.3. in \cite{La91} the B-attractor $\mathcal{A}_\beta$ 
consists of complete trajectories connecting stationary states of the semiflow $(\Phi_\beta,\operatorname{H}^1)\,$. 
Since we know that $\hat{{\mathcal A}}_\beta=\{ 0 \}$ by the proof of the first statement and since non-trivial complete orbits connecting $\{0\}$ with 
itself are excluded by the existence of the Ljapunov function, we conclude that $\mathcal{A}_\beta=\hat{{\mathcal A}}_\beta=\{ 0 \}\,$.  
\end{proof}
We conclude with some remarks on open problems and the extension of the results to a more general 
class of nonlinearities  $g_\beta\,$, which so far we have assumed to be equal to $\tanh(\beta\cdot)$ for simplicity.
\begin{rems}
{\bf (a)}
Assume that for each $\beta\in(0,\infty)$ the function $g_\beta: \mathbb{R} \to \mathbb{R}$ has the following properties:
\begin{itemize}
    \item[(i)] $g_\beta$ is bounded and globally Lipschitz continous.
    \item[(ii)] $g_\beta(0)=0$ and $g'_\beta(0)=\beta\,$.
    \item[(iii)] $g_\beta(w) (w - \frac{g_\beta(w)}{\beta})>0$ for $w\neq 0\,$.
\end{itemize}
Then Theorem \ref{mainresult} remains true if $\tanh(\beta\cdot)$ 
in \eqref{tsEq} is replaced by a different nonlinear function 
$g_\beta(\cdot)$ with the above properties. 
This follows from an inspection of the assumptions made on $g_\beta\,$. 
In particular, in the proof of Proposition \ref{vieSol20} and in Remark \ref{Ljap2}.\\[0.1cm]
{\bf (b)}     
We conjecture that $\mathcal{A}_\beta=\hat{{\mathcal A}}_\beta=\{ 0 \}$ for $\beta\in(0,\beta_0)\,$. 
Since we were not able to find a Ljapunov function for $\beta\geq\frac{4}{\pi}$, 
this natural conjecture may require a different proof that we don't have at this time.\\[0.1cm] 
{\bf (c)} By the existence of a non-trivial periodic solution for $\beta\in(\beta_0,\beta_0+\varepsilon)\,$ for 
some $\varepsilon > 0$ we know that $\{0\}$ is a proper subset of $\mathcal{A}_\beta$ for $\beta\in(\beta_0,\beta_0+\varepsilon)\,$. 
We conjecture that this is true for $\beta\in (\beta_0,\infty)\,$.\\[0.1cm] 
{\bf (d)}
A proof of the two conjectures above would imply that $(0,\beta_0)$ is the maximal open set in $[0,\infty)$ 
where $\mathcal{A}_\beta=\hat{{\mathcal A}}_\beta=\{ 0 \}\,$. 
\end{rems}

\bibliography{lite.bib}
\end{document}